\newtheorem{theorem}{Theorem}[section]
\newtheorem{corollary}[theorem]{Corollary}
\newtheorem{lemma}[theorem]{Lemma}
\newtheorem{proposition}[theorem]{Proposition}
\theoremstyle{definition}
\newtheorem{remark}[theorem]{Remark}
\DeclareMathOperator{\Aut}{Aut}
\DeclareMathOperator{\id}{id}
\newcommand{\bbF}{\mathbb{F}}
\newcommand{\edge}[1]{\stackrel{#1}{\longrightarrow}}
\title[Graphical Frobenius representations]{Graphical Frobenius representations of non-abelian groups}
\author{G\'abor Korchm\'aros}
\address{Dipartimento di Matematica, Informatica ed Economia\\
	Universit\`a della Basilicata\\
	Contrada Macchia Romana\\
	85100 Potenza, Italy}
\email{gabor.korchmaros@unibas.it}
\author{G\'abor P. Nagy}
\address{Department of Algebra \\
	Budapest University of Technology and Economics\\
	Egry J\'ozsef utca 1\\
	H-1111 Budapest, Hungary}
\address{Bolyai Institute \\
	University of Szeged \\
	Aradi v\'ertan\'uk tere 1\\
	H-6720 Szeged, Hungary}
\email{nagyg@math.bme.hu}
\thanks{Support provided by NKFIH-OTKA Grants 114614, 115288 and 119687.}
\date{Version 06/09/2019.}
\keywords{Cayley graph, Frobenius group, Suzuki $2$-group, Frobenius graphical representation}
\subjclass[2010]{20B25, 05C25}
\begin{document}

\begin{abstract}
A group $G$ has a Frobenius graphical representation (GFR) if there is a simple graph $\varGamma$ whose full automorphism group is isomorphic to $G$ and it acts on vertices as a Frobenius group. In particular, any group $G$ with GFR is a Frobenius group and $\varGamma$ is a Cayley graph. The existence of an infinite family of groups with GFR whose Frobenius kernel is a non-abelian $2$-group has been an open question. In this paper, we give a positive answer by showing that the Higman group $A(f,q_0)$ has a GFR for an infinite sequence of $f$ and $q_0$. 
\end{abstract}

\maketitle

\section{Introduction} Graphs and their automorphism groups have intensively been investigated especially
for vertex-transitive (and hence regular) graphs. Many contributions have concerned vertex-transitive graphs with large automorphism groups compared to the degree of the graph, and have in several cases relied upon deep results from Group theory, such as  the classification of primitive permutation groups.

On the other end, the smallest vertex-transitive  automorphism groups of graphs occur when the group is regular on the vertex-set. A group is said to have a graphical regular representation (GRR) problem if there exists a graph whose (full) automorphism group is isomorphic to $G$ and acts regularly on the vertex-set. Actually, almost all finite groups have (GRR). In fact, all the few exceptions were found in the 1970-80s by a common effort of G. Sabidussi, W. Imrich, M.E. Watkins, L.A. Nowitz, D. Hetzel, C.D. Godsil, and L. Babai, see \cite{MR3864735}*{Section 1}. Since regular automorphism groups of a graph are those which are vertex transitive but contain no non-trivial automorphism fixing a vertex, a natural next choice as a small vertex-transitive automorphism group of a graph may be a Frobenius group on the vertex-set: an automorphism group of a graph that is vertex-transitive but not regular and only the identity fixes more than one vertex. It is well known that every group may be a Frobenius group in at most one way. Furthermore, each graph $\Gamma$ with a (sub)group $G$ of automorphisms acting regularly on the vertex-set is a Cayley graph $\mathrm{Cay}(\Gamma(G,S))$.

All these give a motivation for the study of Frobenius groups $G$ which have a graphical Frobenius representation (GFR), that is, there exists a graph whose (full) automorphism group is isomorphic to $G$ and acts on the vertex-set as a Frobenius group. The systematic study of the GFR problem was initiated by J.K. Doyle, T.W. Tucker and M.E. Watskin  in their recent paper \cite{MR3864735}. As it was pointed out by those authors, the GFR problem is largely not analogous to the GRR problem since all groups have a regular representation whereas  Frobenius groups have highly restricted algebraic structures, and many large classes of abstract groups are not Frobenius groups. It is apparent from the results, examples and classification of smaller groups with GFR in \cite{MR3864735}, see in particular \cite{MR3864735}*{Theorem 5.3 and Remark 5.4}, that an interesting open question is the existence of a (possible infinite) family of Frobenius groups with GFR whose kernel is a non-abelian $2$-group.

In this paper we give an affirmative answer to that question. Our choice of Frobenius groups is influenced by Higman's classification of Suzuki $2$-groups \cite{hig}, as we take for $G$ the group $A(f,q_0)$ from Higman's list where $q_0$ and $q=2^f$ are $2$-powers. $A(f,q_0)$ is a subgroup of $G$ of $GL(3,\mathbb{F}_q)$ whose main properties are recalled in Section \ref{sec1}. 
We build a Cayley graph $\Gamma_u$ on the Frobenius kernel $K$ of $G$, with a certain inverse closed subset $S$ of $K$ as generating set, constructed from an element $u\in \mathbb{F}_q$.  We show that $G$ has GFR on $\Gamma_u$ provided that $q=2^f$, $q_0$ and $u$ are carefully chosen.

Our notation and terminology are standard. For the definitions and known results on Frobenius groups which play a role in the present paper, the reader is referred to  \cite{MR3864735}.

\section{The group $A(f,q_0)$}
\label{sec1}
Let $\mathbb{F}_q$ be the finite field of order $q=2^f$ with $f\ge 4$, and let $q_0=2^{f_0}$ be another power of $2$ smaller than $q$. For $a,c\in \bbF_q$ we write
\[\varPhi_{a,c}=\begin{bmatrix} 1&0&0 \\ a&1&0 \\ c&a^{q_0}&1 \end{bmatrix}, \qquad
\varPsi_\lambda = \begin{bmatrix} 1&0&0 \\ 0&\lambda&0 \\ 0&0&\lambda^{q_0+1} \end{bmatrix}.
\]
We define the groups
\begin{align*}
K&=\{\varPhi_{a,c}\mid a,c \in \bbF_q \},\\
H&=\{ \varPsi_\lambda \mid \lambda \in \bbF_q^* \}.
\end{align*}
Then, $K$ is a $2$-group of order $q^2$ and $H$ is a cyclic group of order $q-1$. Moreover, $H$ normalizes $K$, and its action fixes no nontrivial element in $K$. Their closure group is $HK$, and denoted by $A(f,q_0)$ in Higman's paper \cite{hig}. For brevity, we write $G$ in place of $A(f,q_0)$. With this change $G=HK$.  Since $H\cap H^g=1$ holds for any $g\in G\setminus H$, $G$ is a Frobenius group in its action on the set $G/H$ of right cosets. The point stabilizer is $H$ and $K$ is a regular normal subgroup. It may be noticed  that when $q=2q_0^2$ then $G$ is similar to the $1$-point stabilizer of the Suzuki group $\mathrm{Sz}(q)$ in its double transitive action on $q^2+1$ points.
A straightforward computation shows that the $H$-orbits in $K$ are
\begin{equation} \label{eq:Omegau}
\Omega_u= \{\varPhi_{a,ua^{q_0+1}} \mid a\in \bbF_q^*\}, \qquad u\in \bbF_q,
\end{equation}
and
\[\Omega_\infty = \{\varPhi_{0,c} \mid c\in \bbF_q^*\}.\]

\section{A Cayley graph arising from $G$}
\label{sec2}
For every $u\in \bbF_q$, we may build a Cayley graph in the usual way:
\[\Gamma_u=\mathrm{Cay}(K,\Omega_u \cup \Omega_{u+1}).\]
Since $\Omega_u \cup \Omega_{u+1}$ is $H$-invariant, the group $G$ induces automorphisms of $\Gamma_u$. This allows us to look at (the matrix group) $G$ as a Frobenius group  on $K=V(\Gamma_u)$. Our aim is to show that if $q$, $q_0$ and $u\in \bbF_q$ are carefully chosen then $\Aut(\Gamma_u)$ coincides with $G$. Define the set $\mathscr{U}_{q,q_0}$ of elements $u\in \bbF_q$ which satisfy both conditions:
\begin{enumerate}[(U1)]
\item $u=(1+\eta^{q_0})/(\eta+\eta^{q_0})$ for some primitive element $\eta$ of $\bbF_q$;
\item the polynomial $X^{q_0+1}+uX^{q_0}+(u+1)X+1$ has no roots in $\bbF_q$.
\end{enumerate}

Then such an appropriate choice of the triple $(q,q_0,u)$ is given in the following theorem.

\begin{theorem} \label{thm:main}
Assume that $q-1$ and $q_0^2-1$ are relatively prime. Then
\begin{enumerate}
\item[(i)] $\Gamma_u$ is connected Cayley graph.
\end{enumerate}  If, in addition,  $u\in \mathscr{U}_{q,q_0}$, then
\begin{enumerate}
\item[(ii)] $\Aut(\Gamma_u) = G$, that is, $G$ has a graphical Frobenius representation on $\Gamma_u$.
\end{enumerate}
\end{theorem}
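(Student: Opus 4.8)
The plan is to establish the two parts by quite different means: (i) by a direct generation argument in $K$, and (ii) by reducing to the vertex-stabiliser and then running a local analysis in which the hypotheses (U1)--(U2) supply the needed rigidity.

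For (i) I would use the multiplication rule $\varPhi_{a,c}\varPhi_{a',c'}=\varPhi_{a+a',\,c+c'+a^{q_0}a'}$, from which $Z(K)=\{\varPhi_{0,c}\mid c\in\bbF_q\}$ and $K/Z(K)\cong(\bbF_q,+)$ via $\varPhi_{a,c}\mapsto a$. As every nonzero first coordinate occurs in $S:=\Omega_u\cup\Omega_{u+1}$, the subgroup $\langle S\rangle$ already surjects onto $K/Z(K)$, so it suffices to capture the centre. For a fixed $a\in\bbF_q^*$ the two neighbours $\varPhi_{a,ua^{q_0+1}}\in\Omega_u$ and $\varPhi_{a,(u+1)a^{q_0+1}}\in\Omega_{u+1}$ differ by the central element $\varPhi_{0,a^{q_0+1}}$, so $\varPhi_{0,a^{q_0+1}}\in\langle S\rangle$ for every $a\neq0$. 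Since $\gcd(q-1,q_0^2-1)=1$ forces $\gcd(q-1,q_0+1)=1$, the map $a\mapsto a^{q_0+1}$ is a bijection of $\bbF_q^*$, whence these central elements exhaust $Z(K)\setminus\{1\}$ and $\langle S\rangle=K$; thus $\Gamma_u$ is connected.

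For (ii) I would set $A:=\Aut(\Gamma_u)$ and $A_1:=\mathrm{Stab}_A(1)$. Since $K\le A$ acts regularly we have $A=A_1K$ and $A_1\cap K=1$, so $|A|=|A_1|\,q^2$; as $G=HK$ has order $(q-1)q^2$ and $G\le A$, the theorem reduces to proving $A_1=H$. The inclusion $H\le A_1$ is already known, the conjugation action being $\varPhi_{a,c}\mapsto\varPhi_{\lambda a,\lambda^{q_0+1}c}$; this shows $H$ has exactly the two orbits $\Omega_u,\Omega_{u+1}$ on the neighbourhood $S$ of the identity vertex, acting regularly on each. Everything thus comes down to showing that $A_1$ is no larger than $H$, i.e.\ that $\Gamma_u$ admits no extra stabiliser symmetry. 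The heart of the matter is a local (subconstituent) analysis: writing a neighbour as $\varPhi_{a,\epsilon a^{q_0+1}}$ with $a\in\bbF_q^*$ and $\epsilon\in\{u,u+1\}$, the vertices $\varPhi_{a,\epsilon_1 a^{q_0+1}}$ and $\varPhi_{b,\epsilon_2 b^{q_0+1}}$ are adjacent in $\Gamma_u$ exactly when
\[
(\epsilon_1+1)\,a^{q_0+1}+\epsilon_2\,b^{q_0+1}+a^{q_0}b=\epsilon_3\,(a+b)^{q_0+1}
\]
for some $\epsilon_3\in\{u,u+1\}$ and $a\neq b$. Dividing by $b^{q_0+1}$ and setting $x=a/b$ reduces each adjacency type to a polynomial equation in $x$, and condition (U2) --- that $X^{q_0+1}+uX^{q_0}+(u+1)X+1$ has no root in $\bbF_q$ --- is precisely what forbids certain of these adjacencies, giving the induced graph a rigid, combinatorially recognisable shape. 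In particular the pairs $\{\varPhi_{a,ua^{q_0+1}},\varPhi_{a,(u+1)a^{q_0+1}}\}$ of neighbours sharing a first coordinate (these differ by $\varPhi_{0,a^{q_0+1}}$) should be singled out by a purely graph-theoretic invariant, so that $A_1$ preserves this fibration of $S$ over $\bbF_q^*$ and hence induces a permutation of the coordinate set $\bbF_q^*$.

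To turn this into $A_1=H$ two things are still needed. First, faithfulness: the cleanest route is to establish $K\trianglelefteq A$, after which conjugation identifies each $\alpha\in A_1$ with the group automorphism $\phi_\alpha\colon k\mapsto\alpha k\alpha^{-1}$, and $\alpha$ acts on the vertex set exactly as $\phi_\alpha$, embedding $A_1$ into $\Aut(K,S):=\{\phi\in\Aut(K):\phi(S)=S\}$. Second, the determination $\Aut(K,S)=H$: here (U1), writing $u=(1+\eta^{q_0})/(\eta+\eta^{q_0})$ with $\eta$ primitive, anchors the connection set to a generator of $\bbF_q^*$ and rules out any residual field symmetry (a Frobenius twist $x\mapsto x^{2^i}$) compatible with the adjacency relation, so that the only admissible automorphisms are the scalar maps $\varPhi_{a,c}\mapsto\varPhi_{\lambda a,\lambda^{q_0+1}c}$ already furnished by $H$. \textbf{The main obstacle} is exactly this rigidity package: securing the normality of $K$ in $A$, and then verifying, through the polynomial conditions (U1)--(U2), that no permutation of $S$ respecting all the adjacency equations survives beyond the scalar action of $H$.
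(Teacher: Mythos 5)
Your proof of part (i) is correct and is essentially the paper's own argument (Lemma \ref{lm:GHKprops}(iii)): both proofs reduce to capturing $Z(K)=\{1\}\cup\Omega_\infty$ inside the subgroup generated by the connection set and then use bijectivity of $a\mapsto a^{q_0+1}$ (from $\gcd(q-1,q_0^2-1)=1$); whether the central elements are obtained as squares of elements of $\Omega_u$ (the paper) or as quotients of the two generators sharing a first coordinate (you) is immaterial.

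Part (ii), however, has a genuine gap, and it is precisely the item you yourself label ``the main obstacle'': your plan defers, rather than solves, the step that constitutes the entire difficulty of the theorem. You propose to ``establish $K\trianglelefteq A$'' as a preliminary, but give no mechanism for doing so, and no local analysis can supply one: polynomial identities describing adjacency inside $\Omega_u\cup\Omega_{u+1}$ only constrain how $A_1$ acts \emph{on the neighbourhood}; they control neither the kernel of that action nor the global structure of $A$, which a priori could be a primitive group vastly larger than $G$. Note that the paper never proves $K\trianglelefteq A$ en route --- normality of $K$ is a \emph{consequence} of $A=G$, available only at the very end; taking it as a stepping stone inverts the logical order and in effect assumes what is to be proved. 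The machinery the paper actually deploys to control $A$ is substantial: it identifies the subgraph induced on $\Omega_u\cup\Omega_{u+1}$ as a generalized Petersen graph $GPG(q-1,k)$ (Proposition \ref{pr:GPG}) and invokes Frucht--Graver--Watkins \cite{GPG} to bound the induced stabilizer by order $4(q-1)$ (Corollary \ref{co:0stab1}); it excludes primitivity of $A$ via the CFSG-based Guralnick--Saxl classification of primitive groups of degree $2^n$ (Proposition \ref{prop:GS}, Lemma \ref{lm:onlyaffine}) together with a Maschke-type argument in the affine case (Proposition \ref{pr:imprimitive}); it pins down the unique nontrivial block system as the cosets of $K'$ using the maximality chain $H<HK'<G$ (Lemma \ref{lm:GHKprops}(v)); it proves the pointwise stabilizer of $K'$ is trivial (Lemma \ref{lm:Eistrivial}), so that $A_\varepsilon$ acts faithfully on the neighbourhood; and only then does Huppert's theorem on solvable $2$-transitive groups \cite{HuppertBlackburn} force $|A_\varepsilon|=q-1$, hence $A_\varepsilon=H$ and $A=G$. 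Nothing of this kind, nor any substitute for it, appears in your proposal.

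The local half of your ``rigidity package'' is also not in place. The claim that the pairs $\{\varPhi_{a,ua^{q_0+1}},\varPhi_{a,(u+1)a^{q_0+1}}\}$ are ``singled out by a purely graph-theoretic invariant'' is asserted, not established; and even granted, it would only show that $A_1$ permutes these fibres, which bounds neither $|A_1|$ nor $|A|$ without the faithfulness and imprimitivity input above. Your reading of the hypotheses is also off target: (U1) is not there to kill Frobenius twists --- it guarantees that $\eta$ is primitive, so that the $u$-edges inside $\Omega_u$ form a single $(q-1)$-cycle (the rim of the Petersen graph), while (U2) eliminates all diagonal spokes between $\Omega_u$ and $\Omega_{u+1}$ (edge type \eqref{eq6}); together these are what make the neighbourhood a $GPG(q-1,k)$ with $k\neq\pm1$, whose automorphism group is known. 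Finally, a small but telling sign that the local computation was not carried through: your displayed adjacency equation is miscopied --- the correct form has coefficients $\epsilon_1$ and $\epsilon_2+1$ with cross term $a^{q_0}b$ (or, symmetrically, $\epsilon_1+1$ and $\epsilon_2$ with cross term $ab^{q_0}$), not the mixture you wrote.
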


The question whether Theorem \ref{thm:main} provides an infinite family is also answered positively.
\begin{theorem} \label{thm:nonempty}
For infinitely many $2$-powers $q$ it is true that whenever the $2$-power $q_0$ satisfies $\gcd(q-1,q_0^2-1)=1$, the set $\mathscr{U}_{q,q_0}$ is not empty.
\end{theorem}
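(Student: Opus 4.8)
The plan is to translate both defining conditions of $\mathscr{U}_{q,q_0}$ into the language of two explicit rational maps, and then produce the required $u$ by a point-count combined with a sieve for primitive elements. First I would record the arithmetic meaning of the hypothesis: since $\gcd(2^a-1,2^b-1)=2^{\gcd(a,b)}-1$, the condition $\gcd(q-1,q_0^2-1)=1$ forces $\gcd(f,2f_0)=1$, so $f$ is odd, $\gcd(f,f_0)=1$, and $\bbF_q\cap\bbF_{q_0}=\bbF_2$; in particular $q_0=2$ is always admissible, so the range of $q_0$ over which the claim must hold is nonempty. Writing $g(\eta)=(\eta^{q_0}+1)/(\eta^{q_0}+\eta)$ and $h(x)=(x^{q_0+1}+x+1)/(x^{q_0}+x)$, condition (U1) says that $u$ lies in the image of $g$ restricted to primitive elements, while $P_u(x)=x^{q_0+1}+ux^{q_0}+(u+1)x+1$ satisfies $P_u(x)=0\iff u=h(x)$ for $x\notin\bbF_2$ and has no root in $\bbF_2$; hence (U2) is exactly $u\notin\operatorname{Im}(h)$. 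Two structural facts are worth extracting: because $\gcd(q-1,q_0-1)=1$, the linearised equation $g(\eta)=u$ has exactly two solutions for all but finitely many $u\in\operatorname{Im}(g)$, so $g$ is generically two-to-one; and $P_u$ is the reciprocal of $P_{u+1}$, so $\operatorname{Im}(h)$ is invariant under $u\mapsto u+1$.

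With this dictionary the goal becomes to \emph{find a primitive $\eta$ with $g(\eta)\notin\operatorname{Im}(h)$}, which I would attack in two stages. In the first, ignoring primitivity, I would show that $P_{g(\eta)}$ is root-free for a positive proportion of $\eta$. Let $r(\eta)$ be the number of roots of $P_{g(\eta)}$ in $\bbF_q$; the pairs counted by $r$ lie on the affine curve $C:F(\eta,x)=0$ with $F=(\eta^{q_0}+\eta)x^{q_0+1}+(\eta^{q_0}+1)x^{q_0}+(\eta+1)x+(\eta^{q_0}+\eta)$, obtained by clearing denominators in $g(\eta)=h(x)$. A crude bound is useless, since $|C(\bbF_q)|=\sum_\eta r(\eta)$ is of size $\approx q$ and the first moment alone cannot separate $\{r=0\}$ from $\emptyset$. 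Instead I would compute the second moment $\sum_\eta r(\eta)^2$ by counting $\bbF_q$-points on the fibre product (the locus of triples $(\eta,x,x')$ with $x,x'$ roots of $P_{g(\eta)}$) via the Weil bound, and then use the identity $\#\{\eta:r(\eta)\ge1\}=|C(\bbF_q)|-\sum_{k\ge2}\#\{\eta:r(\eta)\ge k\}$ to show that the roots are clustered enough that $\#\{\eta:r(\eta)\ge1\}\le(1-\delta)q$ for an absolute $\delta>0$. This yields $\#\{\eta:g(\eta)\notin\operatorname{Im}(h)\}\ge\delta q$.

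In the second stage I would insert the standard multiplicative-character indicator of primitivity and estimate the incomplete sums $\sum_{(\eta,x)\in C}\chi(\eta)$, again by the Weil bound, so that the weighted count of primitive $\eta$ with $r(\eta)=0$ reproduces $\tfrac{\varphi(q-1)}{q-1}$ times the stage-one quantity, up to an error of the shape $O\!\big(2^{\omega(q-1)}(\deg C)^2\sqrt q\big)$, where $\omega$ denotes the number of distinct prime divisors. To make the main term $\ge\delta\,\varphi(q-1)$ dominate I would specialise to the infinite family $q=2^{\ell}$ with $\ell$ prime: every prime divisor $p$ of $2^{\ell}-1$ satisfies $p\equiv1\pmod{2\ell}$, whence $\omega(2^{\ell}-1)=O(\ell/\log\ell)$ gives $2^{\omega(q-1)}=q^{o(1)}$ and $\varphi(2^{\ell}-1)/(2^{\ell}-1)\to1$. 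For $\ell$ large the main term beats the error, a primitive $\eta$ with $g(\eta)\notin\operatorname{Im}(h)$ exists, and $\mathscr{U}_{q,q_0}\neq\emptyset$.

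The main obstacle is uniformity in $q_0$. The curve $C$ has degree $\sim q_0$, so its genus and the Weil error grow with $q_0$, and the estimates above are effective only while $q_0$ is not too large compared with $q$; the complementary range of large $q_0$ must be handled separately, for instance by bounding $|\operatorname{Im}(h)|$ directly from the reciprocal/linearised structure of $P_u$ rather than through $C$, or by a sharper incomplete-sum estimate exploiting that $g(\eta)=u$ and $h(x)=u$ are additive in $\eta^{q_0},\eta$ and in $x^{q_0},x$. Controlling the second moment $\sum_\eta r(\eta)^2$ well enough to push $\#\{r\ge1\}$ a definite proportion below $q$, uniformly in $q_0$, is the technical crux; the number-theoretic input $\varphi(2^{\ell}-1)/(2^{\ell}-1)\to1$ is then exactly what upgrades ``for all large $q$'' to the stated ``for infinitely many $q$''.
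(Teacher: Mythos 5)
Your reduction --- find a primitive $\eta$ with $g(\eta)\notin\operatorname{Im}(h)$, then use $\varphi(2^p-1)/(2^p-1)\to 1$ for primes $p$ to supply infinitely many good exponents --- starts on the same track as the paper, and your number-theoretic input is essentially the paper's Lemma on $\varphi(2^n-1)$. But the proposal has a genuine gap at exactly the point you call the technical crux, and the machinery you propose cannot close it. The theorem requires, for one fixed good $q$, nonemptiness of $\mathscr{U}_{q,q_0}$ for \emph{every} admissible $q_0$, and $q_0$ ranges up to $q/2$. Since your curve $C$ has degree of order $q_0$, every Weil-type error you invoke (point count on $C$, second moment over the fibre product, character-sum sieve) is at least of size $(\deg C)^2\sqrt q\sim q_0^2\sqrt q$, which swamps the main term $q$ as soon as $q_0\gg q^{1/4}$; the ``complementary range of large $q_0$'' you defer is in fact most of the range. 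Moreover, the second moment by itself cannot produce the upper bound $\#\{\eta: r(\eta)\ge 1\}\le(1-\delta)q$: since $g$ is a bijection (see below), $\#\{\eta:r(\eta)\ge1\}=|\operatorname{Im}(h)|+O(1)$, so what is needed is precisely a fibre-multiplicity bound for $h$, and extracting one from the fibre product means identifying its off-diagonal components --- which is the same as finding the symmetry you are missing. That missing idea is elementary: in characteristic $2$ one has the identities $h(x)=h\bigl((x+1)/x\bigr)=h\bigl(1/(x+1)\bigr)$, and since $\gcd(q-1,q_0^2-1)=1$ forces $f$ odd, $\bbF_4\not\subseteq\bbF_q$ and the three points $x,\ (x+1)/x,\ 1/(x+1)$ are pairwise distinct; hence $h$ is at least $3$-to-$1$ and $|\operatorname{Im}(h)|\le(q-2)/3$, uniformly in $q_0$ and with no point counting at all. (The symmetry you did find, that $P_{u+1}$ is the reciprocal of $P_u$, only shows $\operatorname{Im}(h)$ is invariant under $u\mapsto u+1$; it gives no multiplicity and hence no size bound.)

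A second inaccuracy hides the fact that no primitivity sieve is needed either. Your claim that $g$ is generically two-to-one is false: clearing denominators in $g(\eta)=u$ gives $(u+1)\eta^{q_0}+u\eta+1=0$, which for $u\neq 0,1$ has exactly two roots, but one of them is always $\eta=1$, a point excluded from the domain of $g$. Thus under $\gcd(q_0-1,q-1)=1$ the map $g$ is a bijection of $\bbF_q\setminus\{0,1\}$, with explicit inverse $u\mapsto 1+\bigl(u/(u+1)\bigr)^{1/(q_0-1)}$, so condition (U1) holds for \emph{exactly} $\varphi(q-1)$ values of $u$ --- no character sums required. The proof then closes by pigeonhole, which is the paper's route: pick $f$ odd with $\varphi(2^f-1)/(2^f-1)>1/3$ (infinitely many exist, by the same Fermat-type estimate you sketch); then the set of $u$ satisfying (U1) has size $>(q-1)/3$, the set of $u$ satisfying (U2) has size $\ge q-(q-2)/3$, and the sum of these exceeds $q$, so the two sets meet. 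Your stages 1 and 2 should be replaced by these two exact facts (bijectivity of $g$, $3$-to-$1$-ness of $h$); as written, the proposal does not constitute a proof.
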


\section{Some more properties of the abstract structure of the group $G$}


\begin{lemma}
The following hold in $K$:
\begin{enumerate}[(i)]
\item $\varPhi_{a,c}^2=\varPhi_{0,a^{q_0+1}}$ and $\varPhi_{a,c}=\varPhi_{a,c+a^{q_0+1}}$.
\item $\varPhi_{a,c}^{-1}\varPhi_{b,d}^{-1}\varPhi_{a,c}\varPhi_{b,d} = \varPhi_{0,a^{q_0}b+ab^{q_0}}$.
\item $\Omega_\infty$ consists of central involutions of $K$.
\item For each $u\in \bbF_q$, we have $\Omega_u^{-1}=\Omega_{u+1}$.
\end{enumerate}
\end{lemma}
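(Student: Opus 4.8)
The plan is to reduce the entire lemma to a single multiplication rule in $K$ and then read off each of the four statements as a special case. First I would establish, by a direct $3\times 3$ matrix product, the closure formula
\[
\varPhi_{a,c}\,\varPhi_{b,d} = \varPhi_{a+b,\;c+d+a^{q_0}b}.
\]
The only point needing attention is the $(3,2)$-entry: the product produces $a^{q_0}+b^{q_0}$, and this equals $(a+b)^{q_0}$ precisely because $\chr\bbF_q=2$ and $q_0$ is a power of $2$, so $x\mapsto x^{q_0}$ is additive (a power of the Frobenius). This one computation simultaneously reconfirms that $K$ is closed under multiplication and supplies the bookkeeping tool for everything that follows.

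From this formula three of the four parts fall out immediately. For (i), putting $b=a$, $d=c$ and using $2x=0$ gives $\varPhi_{a,c}^2=\varPhi_{0,a^{q_0+1}}$; solving $\varPhi_{a,c}\varPhi_{a',c'}=\varPhi_{0,0}$ yields $a'=a$ and $c'=c+a^{q_0+1}$, i.e. the inverse formula $\varPhi_{a,c}^{-1}=\varPhi_{a,\,c+a^{q_0+1}}$ (the evident intended reading of the second identity in (i)). For (iii), the square formula with $a=0$ gives $\varPhi_{0,c}^2=\varPhi_{0,0}$, so each nontrivial $\varPhi_{0,c}$ is an involution; centrality follows either by comparing $\varPhi_{0,c}\varPhi_{b,d}$ with $\varPhi_{b,d}\varPhi_{0,c}$ directly (both equal $\varPhi_{b,\,c+d}$) or as the case $a=0$ of part (ii). For (iv), the inverse of a typical element $\varPhi_{a,\,ua^{q_0+1}}\in\Omega_u$ is, by (i), $\varPhi_{a,\,(u+1)a^{q_0+1}}\in\Omega_{u+1}$; letting $a$ range over $\bbF_q^*$ gives $\Omega_u^{-1}=\Omega_{u+1}$.

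The only part with genuine computation is the commutator identity (ii), which I would assemble in stages from the closure and inverse formulas: first $\varPhi_{a,c}^{-1}\varPhi_{b,d}^{-1}=\varPhi_{a+b,\,c+d+a^{q_0+1}+b^{q_0+1}+a^{q_0}b}$ and $\varPhi_{a,c}\varPhi_{b,d}=\varPhi_{a+b,\,c+d+a^{q_0}b}$, then multiply these two elements sharing the same first coordinate $a+b$. The new first coordinate is $(a+b)+(a+b)=0$, and in the second coordinate the $c,d$ and mixed $a^{q_0}b$ contributions cancel in pairs, leaving $a^{q_0+1}+b^{q_0+1}+(a+b)^{q_0+1}=a^{q_0}b+ab^{q_0}$ after expanding $(a+b)^{q_0+1}=(a^{q_0}+b^{q_0})(a+b)$. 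Hence the commutator is $\varPhi_{0,\,a^{q_0}b+ab^{q_0}}$.

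I do not anticipate a real obstacle: the whole lemma is a routine but careful characteristic-$2$ computation, and the ``hard part'' is merely keeping the additivity of $x\mapsto x^{q_0}$ in view and tracking the cancellations so that the symmetric expression $a^{q_0}b+ab^{q_0}$ emerges cleanly in (ii).
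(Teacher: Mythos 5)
Your proof is correct and takes essentially the same route as the paper, whose entire proof reads ``straightforward matrix computation'': your closure formula $\varPhi_{a,c}\varPhi_{b,d}=\varPhi_{a+b,\,c+d+a^{q_0}b}$, valid because $x\mapsto x^{q_0}$ is additive in characteristic $2$, is precisely that computation organized once and for all, and the cancellations you track in (ii) check out. You are also right that the second identity in (i) should be read as the inverse formula $\varPhi_{a,c}^{-1}=\varPhi_{a,\,c+a^{q_0+1}}$, which is the form the paper itself uses later (in Lemma \ref{lm:ptstabK'}).
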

\begin{proof}
Straightforward matrix computation.
\end{proof}

\begin{lemma} \label{lm:GHKprops}
Assume that $\gcd(q_0^2-1,q-1)=1$. Then the following hold:
\begin{enumerate}[(i)]
\item $K'=Z(K)=\{1\}\cup \Omega_\infty$.
\item $K'$ and $K/K'$ are elementary Abelian $2$-groups of order $q$.
\item For $u\in \bbF_q$, the set $\Omega_u$ generates $K$.
\item $H$ acts transitively (hence irreducibly) on $K'$ and $K/K'$.
\item The subgroup $H$ is maximal in $HK'$, which is maximal in $G$.
\end{enumerate}
\end{lemma}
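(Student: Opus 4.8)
The plan is to read every assertion off a few explicit identities in $K$. Besides the squaring, inversion and commutator formulas recorded in the previous lemma, I would first note the general product rule
\[
\varPhi_{a,c}\varPhi_{b,d}=\varPhi_{a+b,\;c+d+a^{q_0}b}
\]
and the conjugation rule
\[
\varPsi_\lambda\varPhi_{a,c}\varPsi_\lambda^{-1}=\varPhi_{\lambda a,\;\lambda^{q_0+1}c},
\]
both immediate matrix computations. These make the additive model transparent: $K/K'$ and $K'$ will each be identified with $(\bbF_q,+)$ through the coordinates $a$ and $c$ respectively.

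For (i), the product rule shows that $\varPhi_{a,c}$ is central if and only if $a^{q_0}b+ab^{q_0}=0$ for all $b\in\bbF_q$; as $ab^{q_0}+a^{q_0}b$ is a linearized polynomial of degree $q_0<q$, it can vanish on all of $\bbF_q$ only if $a=0$, so $Z(K)=\{\varPhi_{0,c}\mid c\in\bbF_q\}=\{1\}\cup\Omega_\infty$. The commutator formula gives $K'\le Z(K)$, and under the identification $Z(K)\cong(\bbF_q,+)$ the equality $K'=Z(K)$ amounts to the claim that $\{a^{q_0}b+ab^{q_0}\mid a,b\in\bbF_q\}$ spans $\bbF_q$ over $\bbF_2$. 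I would prove this by trace duality, and this is the one place where the hypothesis is essential. If a nonzero $t$ satisfied $\Tr\!\big(t(a^{q_0}b+ab^{q_0})\big)=0$ for all $a,b$, then, using the nondegeneracy of the trace form together with $\Tr(x^{q_0})=\Tr(x)$ and varying $a$, one is forced to have $(tb)^{1/q_0}=tb^{q_0}$ for every $b$ (here $x\mapsto x^{1/q_0}$ is the inverse of the automorphism $x\mapsto x^{q_0}$); raising to the power $q_0$ gives $tb=t^{q_0}b^{q_0^2}$. Taking $b=1$ yields $t=t^{q_0}$, and substituting back gives $b^{q_0^2-1}=1$ for every $b\in\bbF_q^*$, i.e.\ $q-1\mid q_0^2-1$ --- impossible when $\gcd(q_0^2-1,q-1)=1$ and $q>2$. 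Hence no such $t$ exists and $K'=Z(K)$.

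Parts (ii) and (iii) then follow quickly. Since $K'=Z(K)$ has order $q$ and consists of involutions it is elementary Abelian of order $q$; since $\varPhi_{a,c}^2=\varPhi_{0,a^{q_0+1}}\in K'$, the quotient $K/K'$ has exponent two and order $q^2/q=q$, so it too is elementary Abelian of order $q$, with $\varPhi_{a,c}K'\mapsto a$ the isomorphism onto $(\bbF_q,+)$. For (iii), set $L=\langle\Omega_u\rangle$; the images of $\Omega_u$ in $K/K'$ are all of $\bbF_q^*$ and hence generate $(\bbF_q,+)$, so $LK'=K$, while the commutators of pairs from $\Omega_u$ are precisely the elements $\varPhi_{0,\,a^{q_0}b+ab^{q_0}}\in L\cap K'$, which span $K'$ by the computation in (i). Thus $K'\le L$ and $L=K$.

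For (iv) I use the conjugation rule: on $K'\cong(\bbF_q,+)$ the group $H$ acts by $c\mapsto\lambda^{q_0+1}c$, and since $q_0+1\mid q_0^2-1$ is coprime to $q-1$ the map $\lambda\mapsto\lambda^{q_0+1}$ permutes $\bbF_q^*$, so $H$ is transitive on $\Omega_\infty=K'\setminus\{1\}$; on $K/K'\cong(\bbF_q,+)$ it acts by $a\mapsto\lambda a$, transitively on the nonzero elements. Transitivity on nonzero vectors forces $\bbF_2H$-irreducibility, since a proper nonzero invariant subspace would be a proper nonempty union of $H$-orbits on nonzero vectors. Finally, (v) follows from irreducibility together with Dedekind's modular law: any $M$ with $H\le M\le HK'$ satisfies $M=H(M\cap K')$, where $M\cap K'$ is an $H$-submodule of $K'$, hence $M\cap K'\in\{1,K'\}$ and $M\in\{H,HK'\}$; similarly any $M$ with $HK'\le M\le G$ satisfies $M=H(M\cap K)$ with $(M\cap K)/K'$ an $H$-submodule of $K/K'$, forcing $M\cap K\in\{K',K\}$ and $M\in\{HK',G\}$. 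The trace-duality spanning argument in (i) is the only genuine obstacle; everything else is bookkeeping with the explicit formulas above.
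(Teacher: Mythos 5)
Your proof is correct, and its treatment of (ii), (iv), (v) — the conjugation rule for $H$, transitivity on nonzero vectors forcing irreducibility, and Dedekind's modular law for the two maximality claims — is exactly the ``straightforward computation'' the paper leaves unwritten. Where you genuinely diverge is the key step behind (i) and (iii). The paper exploits the gcd hypothesis multiplicatively: it notes that $t\mapsto t+t^{q_0}$ has kernel $\bbF_2$ and that $a\mapsto a^{q_0+1}$ is a bijection of $\bbF_q^*$, so the factorization $a^{q_0}b+ab^{q_0}=a^{q_0+1}\bigl(t+t^{q_0}\bigr)$ with $t=b/a$ shows that \emph{every} element of $\bbF_q$ is a single commutator value; hence the commutator map is already surjective onto $Z(K)$, and for (iii) the squares $\varPhi_{a,ua^{q_0+1}}^2=\varPhi_{0,a^{q_0+1}}$ sweep out all of $\Omega_\infty$ by the same bijectivity. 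You instead prove the weaker (but entirely sufficient) statement that the commutator values merely \emph{span} $\bbF_q$ over $\bbF_2$, via nondegeneracy of the trace form and the invariance $\Tr(x^{q_0})=\Tr(x)$, and then reuse that spanning fact to get $K'\le\langle\Omega_u\rangle$ in (iii). Both arguments use the hypothesis in an essential and correct way; the trade-off is that the paper's route is shorter and yields the stronger structural fact that every element of $K'$ is a single commutator (and every element of $\Omega_\infty$ a single square from $\Omega_u$), while your duality argument is self-contained, does not need the multiplicative factorization trick, and would adapt to situations where one can only hope for spanning rather than surjectivity — at the cost of the Frobenius-twisting computation and of invoking $q>2$ explicitly at the end.
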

\begin{proof}
By the assumption, the map $a\mapsto a+a^{q_0}$ has kernel $\bbF_2$, and, $a\mapsto a^{q_0+1}$ is a bijection of $\bbF_q^*$. Hence, any element of $\bbF_q$ can be written in the form $a^{q_0}b+ab^{q_0}$, which implies (i). For $a\in \bbF_q^*$, we have $\varPhi_{a,ua^{q_0+1}}^2=\varPhi_{0,a^{q_0+1}}$. Thus, $\Omega_\infty \subseteq \langle \Omega_u \rangle$ and (iii) follows. The rest is straightforward computation.
\end{proof}
Notice that Lemma \ref{lm:GHKprops}(iii) yields Theorem \ref{thm:main}(i),

\section{On Conditions (U1) and (U2)}
A natural key question regarding the applicability of Theorem \ref{thm:main} is the existence of some $q$ such that $\mathscr{U}_{q,q_0}$ is not empty, that is,  $\bbF_q$ contains an element $u$ satisfying both Conditions $U(1)$ and $U(2)$. Theorem \ref{thm:nonempty} states that infinitely many such $q$ exist and we are going to show how to prove it using Euler's phi function and the M\"obius function. For this purpose, we need some algebraic preparatory results stated in the next lemmas.

\begin{lemma} \label{lm:U(x)roots}
Let $q=2^f$ be a power of $2$ with odd exponent $f$. There exist at least $2(q+1)/3$ elements $u \in \bbF_q$ such that $X^{q_0+1}+uX^{q_0}+(u+1)X+1$ has no roots in $\bbF_q$.
\end{lemma}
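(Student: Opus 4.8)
The plan is to exploit the fact that the defining equation $P_u(x)=0$, where $P_u(X)=X^{q_0+1}+uX^{q_0}+(u+1)X+1$, is \emph{linear in the parameter} $u$. Grouping the $u$-terms gives
\[
P_u(x)=\bigl(x^{q_0+1}+x+1\bigr)+u\bigl(x^{q_0}+x\bigr),
\]
so for a fixed $x\in\bbF_q$ with $x^{q_0}\neq x$ there is exactly one value
\[
u=\psi(x):=\frac{x^{q_0+1}+x+1}{x^{q_0}+x}
\]
making $x$ a root. If instead $x^{q_0}=x$, then $P_u(x)=x^2+x+1$ independently of $u$; since $f$ is odd we have $2^f\equiv -1\pmod 3$, so $3\nmid q-1$, the polynomial $X^2+X+1$ is irreducible over $\bbF_q$, and such an $x$ is never a root. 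Writing $A=\{x\in\bbF_q:x^{q_0}\neq x\}$, I would record that the set of solution pairs $\{(u,x):P_u(x)=0\}$ is in bijection with $A$ via $x\mapsto(\psi(x),x)$, hence has exactly $|A|=q-m$ elements, where $m=|\{x\in\bbF_q:x^{q_0}=x\}|\ge 2$ (this set contains $0$ and $1$). A value $u$ is \emph{bad} (i.e.\ has a root) precisely when it lies in the image $\psi(A)$, and the goal is to bound $|\psi(A)|$ from above.

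The key step, and the one I expect to be the crux, is uncovering an order-$3$ symmetry of the roots. Consider the projective map $\rho(x)=(x+1)/x$; a short computation shows $\rho^3=\id$ and that its only fixed points satisfy $x^2+x+1=0$, so $\rho$ has no fixed point in $\bbF_q$ (again because $f$ is odd). The decisive identity, which I would verify by direct expansion using $(X+1)^{q_0}=X^{q_0}+1$ in characteristic $2$, is
\[
X^{q_0+1}\,P_u\!\left(\frac{X+1}{X}\right)=P_u(X).
\]
This shows that $\rho$ permutes the roots of $P_u$ in $\bbF_q$ for \emph{every} $u$ simultaneously. Since $0$ and $1$ are never roots ($P_u(0)=P_u(1)=1$), the map $\rho$ restricts to a fixed-point-free permutation of the root set of each $P_u$ satisfying $\rho^3=\id$; consequently the number of roots of any given $P_u$ in $\bbF_q$ is a multiple of $3$, and in particular is $\ge 3$ whenever it is nonzero.

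It then remains to double-count. On one hand the set of solution pairs has size $q-m$; on the other hand each bad $u$ contributes at least $3$ pairs, so
\[
3\,\bigl|\{\text{bad }u\}\bigr|\le q-m,\qquad\text{i.e.}\qquad \bigl|\{\text{bad }u\}\bigr|\le \frac{q-m}{3}\le\frac{q-2}{3}.
\]
Therefore the number of $u$ for which $P_u$ has no root in $\bbF_q$ is at least
\[
q-\frac{q-2}{3}=\frac{2q+2}{3}=\frac{2(q+1)}{3},
\]
as claimed. Equivalently, one may phrase the middle step as saying $\psi$ is constant on the $\rho$-orbits, all of which have size $3$ because $A$ is $\rho$-invariant and $\rho$ is fixed-point-free, whence $|\psi(A)|\le|A|/3$. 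The only genuine difficulty is discovering the substitution $\rho(x)=(x+1)/x$ and the polynomial identity above; everything else is bookkeeping in characteristic $2$ together with the elementary fact that $f$ odd forces $X^2+X+1$ to be irreducible over $\bbF_q$.
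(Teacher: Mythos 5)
Your proof is correct and is essentially the paper's argument: the paper defines the same rational function $U(x)=\psi(x)=(x^{q_0+1}+x+1)/(x^{q_0}+x)$, observes the same order-$3$ fixed-point-free symmetry $x\mapsto (x+1)/x\mapsto 1/(x+1)$ (fixed-point-free because $f$ odd rules out $\bbF_4\subseteq\bbF_q$), and bounds the number of bad $u$ by $(q-2)/3$. Your packaging via the identity $X^{q_0+1}P_u\bigl((X+1)/X\bigr)=P_u(X)$ and a double count of pairs $(u,x)$ is, as you note yourself, equivalent to the paper's statement that $U$ is constant on the $\rho$-orbits, so the two proofs differ only in bookkeeping.
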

\begin{proof}
Define the rational function
\[U(x)=\frac{x^{q_0+1}+x+1}{x^{q_0}+x}.\]
Clearly, $0$ and $1$ are never roots of $X^{q_0+1}+uX^{q_0}+(u+1)X+1$. Moreover, $X^{q_0+1}+uX^{q_0}+(u+1)X+1$ has a root in $\bbF_q$ if and only if $u=U(x)$ for some $x\in \bbF_q\setminus\{0,1\}$. Since $U(0)=U(1)=\infty$ and
\[U(x)=U\left(\frac{x+1}{x}\right) =U\left(\frac{1}{x+1}\right)\]
identically, we have $|U(\bbF_q\setminus\{0,1\})|\leq (q-2)/3$. Here we use the fact that $\bbF_4$ is not a subfield of $\bbF_q$ and $x$, $(x+1)/x$, $1/(x+1)$ are distinct elements of $\bbF_q$. 
\end{proof}

\begin{lemma} \label{lm:eulerphi}
For infinitely many odd integers $n$ holds $\varphi(2^n-1)/(2^n-1)>1/3$.
\end{lemma}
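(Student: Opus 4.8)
The plan is to reduce to the case where $n=\ell$ is an odd prime and exploit the strong arithmetic constraints this places on the prime divisors of the Mersenne-type number $2^\ell-1$. The starting point is the Euler product formula
\[
\frac{\varphi(2^n-1)}{2^n-1}=\prod_{p\mid 2^n-1}\left(1-\frac1p\right),
\]
so the task is equivalent to showing that, for infinitely many odd $n$, this product stays bounded away from (indeed well above) $1/3$. The danger is that $2^n-1$ could accumulate many small prime factors, dragging the product toward $0$; the key observation is that choosing $n$ prime rules this out entirely.

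First I would record that if $\ell$ is an odd prime and $p\mid 2^\ell-1$, then the multiplicative order of $2$ modulo $p$ divides $\ell$ and is not $1$, hence equals $\ell$; therefore $\ell\mid p-1$, and since $p$ is odd we also have $2\mid p-1$, so $2\ell\mid p-1$ and in particular $p\ge 2\ell+1$. Thus \emph{every} prime factor of $2^\ell-1$ is large.

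Next I would bound the number $\omega(2^\ell-1)$ of distinct prime divisors. Since the product of the distinct primes dividing $2^\ell-1$ is at most $2^\ell-1<2^\ell$ and each such prime exceeds $2\ell$, we get $(2\ell+1)^{\omega(2^\ell-1)}<2^\ell$, hence $\omega(2^\ell-1)<\ell\log 2/\log(2\ell+1)$. Combining this with the elementary inequality $\prod_p(1-1/p)\ge 1-\sum_p 1/p$ and the bound $p\ge 2\ell+1$ yields
\[
\frac{\varphi(2^\ell-1)}{2^\ell-1}\ \ge\ 1-\frac{\omega(2^\ell-1)}{2\ell+1}\ \ge\ 1-\frac{\ell\log 2}{(2\ell+1)\log(2\ell+1)}.
\]
The right-hand side tends to $1$ as $\ell\to\infty$, so it exceeds $1/3$ for all sufficiently large primes $\ell$. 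Since there are infinitely many odd primes, and each is an odd integer, the lemma follows.

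The only genuine content is the counting step: one must ensure the lower bound $p\ge 2\ell+1$ on the size of the prime factors outweighs the (crudely bounded) number of them. I expect this to be the main obstacle in a fully rigorous write-up, though the estimate is comfortable---the product in fact approaches $1$, so the threshold $1/3$ is reached with enormous room to spare, and no appeal to the M\"obius function is needed for this particular statement.
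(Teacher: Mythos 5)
Your proof is correct and coincides in essence with the paper's own elementary argument: both restrict to prime exponents $\ell$, use Fermat's Little Theorem to show every prime factor of $2^\ell-1$ is congruent to $1 \pmod{2\ell}$ and hence at least $2\ell+1$, bound the number of such factors by comparing $(2\ell+1)^{\omega}$ with $2^\ell$, and conclude the Euler product tends to $1$. The only cosmetic difference is the final estimate, where you invoke the Weierstrass inequality $\prod(1-x_i)\ge 1-\sum x_i$ while the paper bounds the product below by $\bigl(1-\tfrac{1}{2\ell}\bigr)^{\ell/\log_2 \ell}$; both close the argument equally well.
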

\begin{proof}
The claim follows from the asymptotic formula of \cite{Shpar}*{Theorem 3}
\[ \frac{1}{M} \sum _{1\leq m \leq M} \frac{\varphi(2^m-1)}{2^m-1}=\mu +O(M^{-1}\log M),\]
with $\mu$ is given by the absolute convergent series
\[\mu=\sum_{\text{$d$ odd}} \frac{\mu(d)}{dt_d} \approx 0.73192,\]
where $t_d$ is the multiplicative order of $2$ modulo $d$, and $\mu(d)$ is the M\"obius function; see \cite{Gathenetal}*{Theorem 4.1}.

We give a second, elementary proof based on Fermat's Little Theorem. We show that for primes $p$, $\varphi(2^p-1)/(2^p-1) \to 1$. Let $r_1,\ldots,r_k$ be the different prime factors of $2^p-1$. For $i=1,\ldots,k$, let $m_i$ be the order of $2$ modulo $r_i$. Then $m_i \mid p$ and $p=m_i$. Moreover, $2^{r_i-1}\equiv 1 \pmod{r_i}$ implies $p\mid (r_i-1)$. In fact, $p\mid (r_i-1)/2$ and $r_i=2s_ip+1$ holds for some integer $s_i\geq 1$. This implies
\[k<\log_{2p}(2^p-1)<\frac{p}{\log_2 p}.\]
Hence,
\[1>\frac{\varphi(2^p-1)}{2^p-1} = \prod_{i=1}^{k} \left(1-\frac{1}{r_i}\right)
>\left(1-\frac{1}{2p}\right)^\frac{p}{\log_2 p},\]
where the latter term converges to $1$. This proves our claim.
\end{proof}
\begin{remark}
As pointed out in \cite{MathOverflow}, much more is true: \cite{Shpar} implies that given any $\varepsilon >0$, there is a $c>0$ such that $\varphi(2^n-1)/(2^n-1)>c$ apart from a set of $n$ with upper density $<\varepsilon$.
\end{remark}

We are in a position to prove Theorem \ref{thm:nonempty}. 
By Lemma \ref{lm:eulerphi}, it suffices to show that for an arbitrary odd integer $f$ with $\varphi(2^f-1)/(2^f-1)>1/3$, $q=2^f$ fulfills the conditions of Theorem \ref{thm:nonempty}. Fix such an $f$ and choose an arbitrary integer $f_0$, coprime to $f$. Then $q_0=2^{f_0}$ satisfies $\gcd(q-1,q_0^2-1)=1$. By the choice of $f$, $\bbF_q$ has more than $(q-1)/3$ primitive elements. In our case, $x\mapsto x^{q_0-1}$ is bijective in $\bbF_q$, hence the maps
\[\eta \mapsto \eta' =\frac{1+\eta^{q_0}}{\eta+\eta^{q_0}}, \qquad
u \mapsto u'=1+\left(\frac{u}{u+1}\right)^\frac{1}{q_0-1}\]
are well-defined inverses to each other. Now, the claim follows from Lemma \ref{lm:U(x)roots}.

\section{Incidences}
\label{sec:incidences}

Recall that $\Gamma_u$ denotes the Cayley graph $\mathrm{Cay}(K,\Omega_u \cup \Omega_{u+1})$, where the vertices of $\Gamma_u$ are the elements of $K$ and $\Omega_u$ is defined in \eqref{eq:Omegau}. The identity $\varPhi_{0,0}$ of $K$ will be also denoted by $\varepsilon$. The group $G=HK$ acts on $K$, the action is induced as follows: The elements of $K$ act in the right regular action and the elements of $H$ act by conjugation. In the sequel, we identify $G$ with its permutation action on $K$, whereby some caution is required since for a subset $X$ of $K$, the point-wise stabilizer of $X$ in $G$ and the centralizer of $X$ in $G$ are in general different. As a permutation group, $G$ is a subgroup of the automorphism group $\Aut(\Gamma_u)$, and $H$ is its cyclic subgroup of order $q-1$, fixing $\varepsilon$ and preserving both $\Omega_u$ and $\Omega_{u+1}$. Formally, $\varepsilon$ is viewed as an element of $\Aut(\Gamma_u)$; nevertheless, we will also use the notation $\id$ to denote the trivial automorphism of $\Aut(\Gamma_u)$.

For any two elements $\varPhi_{a,c}, \varPhi_{b,d} \in K$ with  $\varPhi_{a,c} \varPhi_{b,d}^{-1} \in \Omega_u$, we introduce the directed edge notation $\varPhi_{a,c} \edge{u} \varPhi_{b,d}$ in $\Gamma_u$ and we refer to it as a $u$-edge. An obvious observation is that the following are equivalent:
\begin{enumerate}[(i)]
\item $\varPhi_{a,c} \edge{u} \varPhi_{b,d}$,
\item $\varPhi_{a,c} \varPhi_{b,d}^{-1} \in \Omega_u$,
\item $c+d=(a+b)^{q_0}(ua+(u+1)b)$,
\item $c+d=u(a+b)^{q_0+1}+a^{q_0}b+b^{q_0+1}.$
\end{enumerate}

Now we collect some incidences in $\Gamma_u$ which play a role in our proof.

\begin{lemma} \label{lm:incideces}
Assume $\gcd(q-1,q_0^2-1)=1$ and define
\[\eta = 1+\left(\frac{u}{u+1}\right)^\frac{1}{q_0-1}\]
for $u\in \bbF_q\setminus\{0,1\}$.
Then the following hold in $\Gamma_u$ for $a,b \neq 0$:
\begin{subequations}
\begin{alignat}{2}
\varPhi_{a,ua^{q_0+1}} &\edge{u} \varPhi_{b,ub^{q_0+1}} && \Longleftrightarrow b=\frac{a}{\eta}, \label{eq1} \\
\varPhi_{a,ua^{q_0+1}} &\edge{u+1} \varPhi_{b,ub^{q_0+1}} && \Longleftrightarrow b=a\eta, \label{eq2} \\
\varPhi_{a,(u+1)a^{q_0+1}} &\edge{u} \varPhi_{b,(u+1)b^{q_0+1}} && \Longleftrightarrow b=a \cdot \frac{\eta}{1+\eta}, \label{eq3} \\
\varPhi_{a,(u+1)a^{q_0+1}} &\edge{u+1} \varPhi_{b,(u+1)b^{q_0+1}} && \Longleftrightarrow b=a \cdot \frac{1+\eta}{\eta}, \label{eq4} \\
\varPhi_{a,ua^{q_0+1}} &\edge{u} \varPhi_{b,(u+1)b^{q_0+1}} && \Longleftrightarrow b=\frac{a}{1+\eta}, \label{eq5}  \\
\varPhi_{a,ua^{q_0+1}} & \edge{u+1} \varPhi_{b,(u+1)b^{q_0+1}} && \Longleftrightarrow \left(\frac{a}{b}\right)^{q_0+1}+u\left(\frac{a}{b}\right)^{q_0}+(u+1)\left(\frac{a}{b}\right)+1=0. \label{eq6}
\end{alignat}
\end{subequations}
\end{lemma}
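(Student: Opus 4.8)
The plan is to reduce each of the six statements to a single equation in the ratio $t=b/a$ (or $s=a/b$), using the edge criterion recorded before the lemma, and then to solve that equation explicitly. First I would record the identity satisfied by $\eta$: raising $\eta+1=(u/(u+1))^{1/(q_0-1)}$ to the power $q_0-1$ and clearing denominators (using $(\eta+1)^{q_0}=\eta^{q_0}+1$) gives
\[(u+1)\eta^{q_0}+u\eta+1=0.\]
Next, for each prescribed pair of endpoints I would insert the explicit second coordinates ($ua^{q_0+1}$ or $(u+1)a^{q_0+1}$, and likewise in $b$) into the $v$-edge criterion $c+d=(a+b)^{q_0}(va+(v+1)b)$, taking $v=u$ for a $u$-edge and $v=u+1$ (so $(a+b)^{q_0}((u+1)a+ub)$) for a $(u+1)$-edge. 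In characteristic $2$ the pure terms $a^{q_0+1},b^{q_0+1}$ cancel in a predictable way, leaving a homogeneous relation in $a,b$; dividing by the appropriate power of $a$ or $b$ turns it into an equation in one variable. A direct computation shows that \eqref{eq1}--\eqref{eq6} produce, respectively, $E(a/b)=0$, $E(b/a)=0$, $\tilde E(b/a)=0$, $\tilde E(a/b)=0$, $(b/a)^{q_0-1}=(u+1)/u$, and the displayed degree-$(q_0+1)$ equation, where
\[E(t)=(u+1)t^{q_0}+ut+1,\qquad \tilde E(t)=ut^{q_0}+(u+1)t+1.\]

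The heart of the argument, and the only step that is not routine manipulation, is to pin down the solution sets of $E$ and $\tilde E$. Both are $\bbF_2$-affine: the homogeneous part $L(t)=(u+1)t^{q_0}+ut$ (resp.\ $ut^{q_0}+(u+1)t$) is additive because $t\mapsto t^{q_0}$ is a Frobenius map, so the zero set of $E$ (resp.\ $\tilde E$), if non-empty, is a coset of $\ker L$. Now $L(t)=0$ forces $t=0$ or $t^{q_0-1}=u/(u+1)$, and the hypothesis $\gcd(q-1,q_0^2-1)=1$ makes $x\mapsto x^{q_0-1}$ a bijection of $\bbF_q^*$; hence $\ker L=\{0,\eta+1\}$ has exactly two elements. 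Since $E(1)=0$ and, by the displayed identity, $E(\eta)=0$, the two roots of $E$ are exactly $1$ and $\eta$; likewise the roots of $\tilde E$ are $1$ and $1+1/(\eta+1)=\eta/(1+\eta)$. This is precisely where the coprimality hypothesis does its work: it guarantees the kernel has size $2$, so no spurious roots can appear.

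It then remains to read off each equivalence. In cases \eqref{eq1}--\eqref{eq4} the root $t=1$ corresponds to $a=b$, for which the two endpoints either coincide or differ by an element of $\Omega_\infty$, so no $u$- or $(u+1)$-edge occurs; this root is harmless because $\eta\notin\{0,1\}$ (indeed $\eta=1+s_0$ with $s_0\ne 0$), whence the prescribed ratios $\eta$, $\eta/(1+\eta)$, $(1+\eta)/\eta$ all differ from $1$ and the biconditionals hold, including at $a=b$. Case \eqref{eq5} is simpler still: the constant term cancels, leaving $(b/a)^{q_0-1}=(u+1)/u$, whose unique solution $b/a=1/(1+\eta)$ comes from the same bijectivity, with no exceptional root to discard; and \eqref{eq6} is a direct restatement of the reduced equation, needing no uniqueness argument. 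The main obstacle is thus the uniqueness in \eqref{eq1}--\eqref{eq5}, which the affine-polynomial/kernel computation resolves, while everything else is routine characteristic-$2$ field arithmetic.
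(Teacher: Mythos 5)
Your proposal is correct and follows essentially the same route as the paper: substitute the explicit second coordinates into the edge criterion $c+d=(a+b)^{q_0}(va+(v+1)b)$, reduce to a polynomial equation in the ratio $a/b$, and pin down its roots via the bijectivity of $x\mapsto x^{q_0-1}$ on $\bbF_q^*$ granted by $\gcd(q-1,q_0^2-1)=1$. Your affine-polynomial/kernel-coset language is just a structural repackaging of the paper's explicit substitution $t\mapsto t+1$ (which parametrizes exactly the coset $1+\ker L$), and your discarding of the root $t=1$ corresponds to the paper's remark that $\Gamma_u$ has no loops, so one may assume $a\neq b$.
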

\begin{proof}
\eqref{eq1}: Since $\Gamma_u$ has no loops, we may assume $a\neq b$.
\begin{align*}
\varPhi_{a,ua^{q_0+1}} \edge{u} \varPhi_{b,ub^{q_0+1}} & \Longleftrightarrow ua^{q_0+1}+ub^{q_0+1}=(a+b)^{q_0}(ua+(u+1)b)\\
& \Longleftrightarrow  0=(u+1)a^{q_0}b+uab^{q_0}+b^{q_0+1}\\
& \Longleftrightarrow  0=(u+1)\left(\frac{a}{b}\right)^{q_0}+u\left(\frac{a}{b}\right)+1\\
& \Longleftrightarrow  0=(u+1)\left(\frac{a}{b}+1\right)^{q_0}+u\left(\frac{a}{b}+1\right)\\
& \Longleftrightarrow  \left(\frac{a}{b}+1\right)^{q_0-1} = \frac{u}{u+1} = (\eta+1)^{q_0-1}\\
& \Longleftrightarrow \frac{a}{b}={\eta}.
\end{align*}
Since $(u+1)$-edges are reversed $u$-edges, we obtain \eqref{eq2} by switching $a$ and $b$ in the computation above. To show \eqref{eq4}, we replace $u$ by $u+1$ and use the computation above to obtain
\begin{align*}
\varPhi_{a,(u+1)a^{q_0+1}} \edge{u+1} \varPhi_{b,(u+1)b^{q_0+1}}
& \Longleftrightarrow  \left(\frac{a}{b}+1\right)^{q_0-1} = \frac{u+1}{u} = \left(\frac{1}{1+\eta}\right)^{q_0-1}\\
& \Longleftrightarrow \frac{a}{b}=\frac{\eta}{1+\eta}.
\end{align*}
This proves \eqref{eq3} by switching $a$ and $b$. For \eqref{eq5}:
\begin{align*}
\varPhi_{a,ua^{q_0+1}} \edge{u} \varPhi_{b,(u+1)b^{q_0+1}} & \Longleftrightarrow ua^{q_0+1}+(u+1)b^{q_0+1}=(a+b)^{q_0}(ua+(u+1)b)\\
& \Longleftrightarrow  0=(u+1)a^{q_0}b+uab^{q_0}\\
& \Longleftrightarrow  \left(\frac{a}{b}\right)^{q_0-1} = \frac{u}{u+1} = (\eta+1)^{q_0-1}\\
& \Longleftrightarrow \frac{a}{b}={1+\eta}.
\end{align*}
Finally,
\begin{align*}
\varPhi_{a,ua^{q_0+1}} \edge{u+1} \varPhi_{b,(u+1)b^{q_0+1}} & \Longleftrightarrow ua^{q_0+1}+(u+1)b^{q_0+1}=(a+b)^{q_0}((u+1)a+ub)\\
& \Longleftrightarrow 0=a^{q_0+1}+ua^{q_0}b+(u+1)ab^{q_0}+b^{q_0+1}\\
& \Longleftrightarrow 0=\left(\frac{a}{b}\right)^{q_0+1}+u\left(\frac{a}{b}\right)^{q_0}+(u+1)\left(\frac{a}{b}\right)+1,
\end{align*}
which shows \eqref{eq6}.
\end{proof}

Our next step is to describe the structure of the neighborhood of the vertex $\varepsilon$ in $\Gamma_u$. For this purpose, we recall the concept of \textit{generalized Petersen graphs} \cite{GPG}. Let $n$ and $k$ be integers with $1\leq k <n/2$, the vertex set of $GPG(n,k)$ is $\{c_1,\ldots,c_n,c_1',\ldots,c_n'\}$ and the edge set consists of all pairs of the form
\[c_ic_{i+1}, \qquad c_ic_i', \qquad c_ic_{i+k}', \qquad i\in \{1,\ldots,n\},\]
where all subscripts are to be read modulo $n$. In order to describe the automorphism group of $GPG(n,k)$, define the permutations
\begin{alignat*}{2}
\rho&:c_i\mapsto c_{i+1}, \quad &&c_i'\mapsto c_{i+1}',\\
\delta&:c_i\mapsto c_{-i}, \quad &&c_i'\mapsto c_{-i}',\\
\alpha&:c_i\mapsto c_{ki}', \quad &&c_i'\mapsto c_{ki}
\end{alignat*}
for all $i\in \{1,\ldots,n\}$. By \cite{GPG}*{Theorem 1 and 2},
\[\langle \rho, \delta \rangle \leq \Aut(GPG(n,k)) \leq \langle \rho, \delta, \alpha  \rangle\]
provided that $n\not\in \{4,5,8,10,12,24\}$. Moreover, the generators $\rho$, $\delta$, satisfy the relations $\rho^n=\delta^2=\id$, $\delta\rho\delta=\rho^{-1}$, hence, $\langle \rho, \delta \rangle$ is isomorphic to the dihedral group of order $2n$. Also, $\alpha\delta=\delta\alpha$, $\alpha^2 \in \{\id,\delta\}$, and most importantly $\alpha^{-1}\rho\alpha =\rho^k$. This implies the following lemma:
\begin{lemma} \label{lm:GPGparam}
Let $n$ be an odd integer, $n\neq 5$, and $1\leq k <n$. In $\Aut(GPG(n,k))$, the following properties hold:
\begin{enumerate}[(i)]
\item The elements of odd order form a unique cyclic normal subgroup of order $n$.
\item For $k\neq \pm 1$, no involution commutes with the cyclic normal subgroup of order $n$. \qed
\end{enumerate}
\end{lemma}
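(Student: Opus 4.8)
The plan is to forget the graph almost entirely and work purely with the abstract group $\hat G=\langle\rho,\delta,\alpha\rangle$, using the relations recorded above, and then transfer the conclusions to $\Aut(GPG(n,k))$. Since $n$ is odd and $n\neq 5$, the hypothesis $n\notin\{4,5,8,10,12,24\}$ holds, so $\langle\rho,\delta\rangle\leq\Aut(GPG(n,k))\leq\hat G$. First I would check that $\langle\rho\rangle\trianglelefteq\hat G$: the relations $\delta\rho\delta^{-1}=\rho^{-1}$ and $\alpha^{-1}\rho\alpha=\rho^{k}$ show that both $\delta$ and $\alpha$ normalise $\langle\rho\rangle$ (the second forcing $\gcd(k,n)=1$, so that $k$ is invertible mod $n$). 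In the quotient $\hat G/\langle\rho\rangle$ the images $\bar\delta,\bar\alpha$ commute (as $\alpha\delta=\delta\alpha$) and satisfy $\bar\delta^{2}=\id$ together with $\bar\alpha^{2}\in\{\id,\bar\delta\}$; moreover $\delta$ and every element of $\langle\rho\rangle$ preserve the partition of the vertex set into inner and outer vertices, whereas $\alpha$ interchanges the two parts, so $\bar\alpha\notin\{\id,\bar\delta\}$ and $\bar\delta\neq\id$. Hence $\langle\bar\delta,\bar\alpha\rangle$ has order exactly $4$ (a Klein four-group when $\alpha^{2}=\id$, a cyclic group of order $4$ when $\alpha^{2}=\delta$), so $\hat G/\langle\rho\rangle$ is a $2$-group of order $4$, $|\hat G|=4n$, and $\langle\rho,\delta\rangle$ has index $2$ in $\hat G$. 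In particular $\Aut(GPG(n,k))$ is either $\langle\rho,\delta\rangle$ or $\hat G$.

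For part (i) the key remark is that for any $g\in\hat G$ the order of its image in the $2$-group $\hat G/\langle\rho\rangle$ divides the order of $g$; if $g$ has odd order this image is trivial, whence $g\in\langle\rho\rangle$. Conversely $\langle\rho\rangle$ is cyclic of odd order $n$, so all of its elements have odd order. Thus the odd-order elements of $\hat G$ form exactly $\langle\rho\rangle$, and since $\langle\rho\rangle\leq\Aut(GPG(n,k))\leq\hat G$ the odd-order elements of $\Aut(GPG(n,k))$ are likewise precisely $\langle\rho\rangle$, a cyclic normal subgroup of order $n$. For the uniqueness clause I would observe that any cyclic normal subgroup of order $n$ consists of odd-order elements (as $n$ is odd), is therefore contained in $\langle\rho\rangle$, and equals it by comparing orders.

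For part (ii) I would compute the centraliser $C(\rho)$, which coincides with $C(\langle\rho\rangle)$ since $\langle\rho\rangle=\langle\rho\rangle$ is generated by $\rho$. Every element of $\hat G$ can be written $\rho^{i}s$ with $s\in\{\id,\delta,\alpha,\alpha\delta\}$, and as $\rho^{i}$ centralises $\rho$ it is enough to test the four representatives $s$. Conjugation fixes $\rho$ for $s=\id$, sends $\rho\mapsto\rho^{-1}$ for $s=\delta$, and using $\alpha\rho\alpha^{-1}=\rho^{k^{*}}$ with $kk^{*}\equiv 1\pmod n$ it sends $\rho\mapsto\rho^{k^{*}}$ for $s=\alpha$ and $\rho\mapsto\rho^{-k^{*}}$ for $s=\alpha\delta$. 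Since $n$ is odd we have $\rho\neq\rho^{-1}$, so $s=\delta$ never centralises $\rho$; and $\rho^{\pm k^{*}}=\rho$ forces $k^{*}\equiv\pm 1$, equivalently $k\equiv\pm 1\pmod n$, which is excluded by hypothesis. Hence $C_{\hat G}(\rho)=\langle\rho\rangle$, and intersecting with $\Aut(GPG(n,k))$ gives $C(\rho)=\langle\rho\rangle$ there as well. As $\langle\rho\rangle$ has odd order it contains no involution, so no involution commutes with the cyclic normal subgroup of order $n$.

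I expect the only genuinely delicate points to be bookkeeping: verifying that $\bar\delta$ and $\bar\alpha$ really generate a group of order $4$ (so that $|\hat G|=4n$ and the four coset representatives are exhaustive), which rests on the inner/outer vertex partition separating $\delta$ from $\alpha$, and correctly converting the stated relation $\alpha^{-1}\rho\alpha=\rho^{k}$ into the action of $\alpha$ and $\alpha\delta$ on $\rho$ so that the centralising condition collapses precisely to $k\equiv\pm 1\pmod n$.
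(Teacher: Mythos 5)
Your proposal is correct and follows essentially the same route as the paper: the paper derives the lemma directly (hence the \qed in the statement) from the Frucht--Graver--Watkins relations $\rho^n=\delta^2=\id$, $\delta\rho\delta=\rho^{-1}$, $\alpha\delta=\delta\alpha$, $\alpha^2\in\{\id,\delta\}$, $\alpha^{-1}\rho\alpha=\rho^k$, which is exactly the derivation you spell out (odd-order elements lie in the kernel of the map onto the $2$-group $\langle\rho,\delta,\alpha\rangle/\langle\rho\rangle$, and the centralizer of $\rho$ collapses to $\langle\rho\rangle$ unless $k\equiv\pm1\pmod n$). Your write-up just makes explicit the bookkeeping the paper leaves implicit.
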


\begin{proposition} \label{pr:GPG}
Assume $\gcd(q-1,q_0^2-1)=1$ and $u\in \mathscr{U}_{q,q_0}$. Then, the neighborhood $\Omega_u \cup \Omega_{u+1}$ of $\varepsilon$ in $\Gamma_u$ is isomorphic to the generalized Petersen graph $GPG(q-1,k)$, where $u = (1+\eta^{q_0})/(\eta+\eta^{q_0})$ and the integer $k$ is defined by $1+\eta=\eta^{k+1}$.
\end{proposition}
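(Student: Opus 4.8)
The plan is to exhibit an explicit isomorphism by reading off all edges of the induced subgraph on $\Omega_u \cup \Omega_{u+1}$ directly from Lemma~\ref{lm:incideces}. First I would invoke Condition (U1) to fix a primitive element $\eta$ with $u = (1+\eta^{q_0})/(\eta+\eta^{q_0})$, so that $\bbF_q^* = \langle \eta \rangle$ and every vertex of the neighborhood is indexed by an exponent modulo $q-1$. Accordingly I set
\[
c_i := \varPhi_{\eta^i,\,u\eta^{i(q_0+1)}} \in \Omega_u, \qquad
c_i' := \varPhi_{\eta^i,\,(u+1)\eta^{i(q_0+1)}} \in \Omega_{u+1},
\]
for $i \in \mathbb{Z}/(q-1)\mathbb{Z}$; since $u\neq u+1$ these are $2(q-1)$ distinct vertices, matching the vertex count of $GPG(q-1,k)$.

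Next I would determine every edge of the induced subgraph from the six equivalences of Lemma~\ref{lm:incideces}, using the defining relation $1+\eta = \eta^{k+1}$ to convert multiplicative ratios into index steps. Equations \eqref{eq1}--\eqref{eq2} show that the neighbors of $c_i$ inside $\Omega_u$ are exactly $c_{i\pm 1}$, so the $\Omega_u$-vertices form the outer $(q-1)$-cycle $c_i \sim c_{i+1}$. Equations \eqref{eq3}--\eqref{eq4}, after substituting $(1+\eta)/\eta = \eta^{k}$, show that the neighbors of $c_i'$ inside $\Omega_{u+1}$ are exactly $c_{i\pm k}'$, giving inner edges $c_i' \sim c_{i+k}'$. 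The two remaining possibilities, an inter-part edge being a $u$-edge or a $(u+1)$-edge, are governed by \eqref{eq5} and \eqref{eq6} respectively: the former yields precisely $c_i \sim c_{i-k-1}'$, while the latter contributes nothing, since Condition (U2) guarantees that $X^{q_0+1}+uX^{q_0}+(u+1)X+1$ has no root in $\bbF_q$ and hence the equivalence \eqref{eq6} never holds. Consequently each $c_i$ meets $\Omega_{u+1}$ in the single vertex $c_{i-k-1}'$, and symmetrically each $c_j'$ meets $\Omega_u$ in $c_{j+k+1}$.

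Finally I would relabel the inner vertices by $d_i := c_{i-k-1}'$. Under this relabeling the cross edges $c_i \sim c_{i-k-1}'$ become the spokes $c_i \sim d_i$, while the inner edges $c_j' \sim c_{j+k}'$ become $d_m \sim d_{m+k}$, since applying the same constant index shift to both endpoints preserves their difference $k$. Thus the induced subgraph has outer cycle step $1$, spokes $c_i \sim d_i$, and inner step $k$, which is exactly $GPG(q-1,k)$. (As $q-1$ is odd and $k\not\equiv 0$, the value of $k$ lies in $\{1,\dots,q-2\}$, in agreement with the range $1\le k<n$ used in Lemma~\ref{lm:GPGparam}; should one prefer the normalization $1\le k<(q-1)/2$, replace $k$ by $q-1-k$ via the isomorphism $GPG(n,k)\cong GPG(n,n-k)$.)

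The heart of the argument is the role of Condition (U2): the whole identification depends on the vanishing of the potential adjacencies in \eqref{eq6}, which forces the bipartite part between $\Omega_u$ and $\Omega_{u+1}$ to be a perfect matching rather than a denser graph. Were (U2) to fail, the neighborhood would acquire extra cross edges and cease to be a generalized Petersen graph. Everything else is bookkeeping---translating the ratios $\eta^{\pm 1},\eta^{\pm k},\eta^{-(k+1)}$ delivered by Lemma~\ref{lm:incideces} into additive index steps and absorbing the shift into the inner relabeling.
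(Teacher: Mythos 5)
Your proof is correct and takes essentially the same approach as the paper: both read all edges of the induced subgraph off Lemma~\ref{lm:incideces}, use Condition (U2) to rule out the cross edges of type \eqref{eq6} so that \eqref{eq5} gives a perfect matching, and convert the ratios into index steps via $1+\eta=\eta^{k+1}$. The only cosmetic difference is that the paper absorbs your final relabeling $d_i=c_{i-k-1}'$ into the definition of the inner vertices from the outset, setting $c_i'=\varPhi_{\eta^i/(1+\eta),\,(u+1)(\eta^i/(1+\eta))^{q_0+1}}$ so that the spokes are $c_ic_i'$ immediately.
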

\begin{proof}
By the choice of $u$, $\eta$ is a primitive element of $\bbF_q$. Define
\[c_i=\varPhi_{\eta^i,u\eta^{i(q_0+1)}}, \qquad c_i'=\varPhi_{\eta^i/(1+\eta),(u+1)(\eta^i/(1+\eta))^{q_0+1}}. \]
From Lemma \ref{lm:incideces}, $c_ic_{.i+1}$, $c_ic'_i$ are edges and there are no more edges in $\Omega_u$ and between $\Omega_u$ and $\Omega_{u+1}$. In $\Omega_{u+1}$, $c_i'$ and $c_j'$ are connected with an $u$-edge if and only if
\[ \frac{\eta^j}{1+\eta}=\frac{\eta^i}{1+\eta}\cdot \frac{1+\eta}{\eta} \Longleftrightarrow \eta^{j-i+1}=1+\eta=\eta^{k+1} \Longleftrightarrow j\equiv i+k \pmod{q-1}. \]
This finishes the proof.
\end{proof}

Notice that $k=\pm 1$ would imply $\eta=0$ or $1+\eta+\eta^2=0$, which is not possible if $\gcd(q-1,q_0^2-1)=1$ and $\eta$ generates $\bbF_q^*$.

\begin{corollary} \label{co:0stab1}
Assume $\gcd(q-1,q_0^2-1)=1$ and $u\in \mathscr{U}_{q,q_0}$. Let $A$ be the permutation group induced by the stabilizer $\Aut(\Gamma_u)_\varepsilon$ on $\Omega_u \cup \Omega_{u+1}$. Then $A$ is solvable, its order is either $(q-1)$, $2(q-1)$ or $4(q-1)$, and it has a unique cyclic normal subgroup of odd order $q-1$. Moreover, $\Aut(\Gamma_u)_\varepsilon$ either preserves $\Omega_u$ and $\Omega_{u+1}$, or it interchanges them.
\end{corollary}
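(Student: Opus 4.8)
The plan is to push the entire analysis onto the generalized Petersen graph furnished by Proposition~\ref{pr:GPG}. Since every automorphism in $\Aut(\Gamma_u)_\varepsilon$ permutes the neighbours of $\varepsilon$ and preserves adjacency among them, the induced group $A$ embeds into the automorphism group of the subgraph of $\Gamma_u$ induced on $\Omega_u \cup \Omega_{u+1}$, which by Proposition~\ref{pr:GPG} is $GPG(n,k)$ with $n=q-1$ and $k\neq \pm 1$. As $q=2^f$ with $f\ge 4$, the integer $n$ is odd, at least $15$, hence different from $5$ and, being odd, outside $\{4,8,10,12,24\}$; therefore the bound $\Aut(GPG(n,k))\leq \langle \rho,\delta,\alpha\rangle$ applies and $A\leq \langle \rho,\delta,\alpha\rangle$. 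For the reverse inclusion I would use $H$: it lies in $\Aut(\Gamma_u)_\varepsilon$, acts faithfully (indeed regularly) on $\Omega_u$, and a direct check shows that conjugation by $\varPsi_\eta$ sends $c_i\mapsto c_{i+1}$ and $c_i'\mapsto c_{i+1}'$, so the image of the generator $\varPsi_\eta$ of $H$ is exactly $\rho$. This yields the sandwich $\langle \rho\rangle\leq A\leq \langle \rho,\delta,\alpha\rangle$.

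From this sandwich the first three assertions follow mechanically. In $\langle \rho,\delta,\alpha\rangle$ the subgroup $\langle \rho\rangle$ is normal with quotient a $2$-group of order at most $4$, so $\langle \rho,\delta,\alpha\rangle$ is metabelian of order dividing $4n$; hence $A$ is solvable. Since $n=|\langle\rho\rangle|$ divides $|A|$ while $|A|$ divides $4n$, the order $|A|$ is one of $n,2n,4n$, i.e.\ $q-1$, $2(q-1)$, or $4(q-1)$. For the cyclic normal subgroup I would exhibit $\langle\rho\rangle$ itself, which has odd order $n=q-1$ and is normal in the overgroup, hence in $A$; uniqueness then comes from Lemma~\ref{lm:GPGparam}(i), because every odd-order element of $\Aut(GPG(n,k))$ already lies in the single cyclic subgroup of order $n$, so any cyclic subgroup of $A$ of odd order $q-1$ is contained in $\langle\rho\rangle$ and equals it by cardinality.

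For the final dichotomy I would exploit the normality of $\langle\rho\rangle$ in $A$. The two orbits of $\langle\rho\rangle$ on $\Omega_u\cup \Omega_{u+1}$ are precisely $\Omega_u=\{c_i\}$ and $\Omega_{u+1}=\{c_i'\}$, since $\rho$ acts transitively on each. A normal subgroup has its orbit set permuted by the whole group, so $A$ permutes the two-element set $\{\Omega_u,\Omega_{u+1}\}$, meaning each element of $\Aut(\Gamma_u)_\varepsilon$ either fixes both parts or interchanges them.

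The corollary is thus essentially a repackaging of Proposition~\ref{pr:GPG}; the only points needing care are the identification of the image of $H$ with $\rho$ and the verification that $n=q-1$ avoids all excluded values in the generalized Petersen automorphism theorem. Once these are in place, the order, solvability, uniqueness, and block statements are immediate. I expect no serious obstacle beyond this bookkeeping, the genuine combinatorial content having already been settled in Proposition~\ref{pr:GPG}.
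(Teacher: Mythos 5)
Your proposal is correct and follows essentially the same route as the paper: the paper's (very terse) proof likewise notes that $A$ contains the cyclic group of order $q-1$ induced by $H$ and then invokes Proposition~\ref{pr:GPG} together with Lemma~\ref{lm:GPGparam} and the automorphism bound $\langle\rho,\delta\rangle\leq\Aut(GPG(n,k))\leq\langle\rho,\delta,\alpha\rangle$. Your write-up simply supplies the bookkeeping (the identification of the image of $H$ with $\langle\rho\rangle$, the exclusion of $n\in\{4,5,8,10,12,24\}$, and the orbit argument for the final dichotomy) that the paper leaves implicit.
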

\begin{proof}
$A$ contains the cyclic subgroup of order $q-1$ that is induced by $H$ on $\Omega_u \cup \Omega_{u+1}$. Proposition \ref{pr:GPG} and Lemma \ref{lm:GPGparam} apply.
\end{proof}

We finish this section with another property of the stabilizer of $\varepsilon$ in $\Aut(\Gamma_u)$.

\begin{lemma} \label{lm:0stab2}
Assume $\gcd(q-1,q_0^2-1)=1$ and $u\in \mathscr{U}_{q,q_0}$.
\begin{enumerate}[(i)]
\item Let $A$ be the centralizer of the commutator subgroup $K'$ in  $\Aut(\Gamma_u)$. Then $K\leq A$ and $|A:K|\leq 2$. Moreover, any element of $A\setminus K$ interchanges the sets $\Omega_u$ and $\Omega_{u+1}$.
\item Let $\alpha \in \Aut(\Gamma)$ be an involution which centralizes $H$. Then $\alpha$ fixes $\Omega_u \cup \Omega_{u+1}$ point-wise.
\end{enumerate}
\end{lemma}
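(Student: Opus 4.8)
```latex
The plan is to prove the two parts separately, in both cases exploiting the structure of $K'=Z(K)$ as the unique set $\{1\}\cup\Omega_\infty$ of central involutions (Lemma~\ref{lm:GHKprops}(i)) together with the generalized Petersen graph description of the neighborhood of $\varepsilon$ from Proposition~\ref{pr:GPG}.

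\emph{For part (i).} First I would observe that $K$ itself centralizes $K'$, since $K'=Z(K)$, so $K\leq A$; the content is the bound $|A:K|\leq 2$. My strategy is to bound $A$ by controlling how it acts on the neighborhood of $\varepsilon$. Since $K'$ is normal in $G$ and is characteristic in a natural sense, $A=C_{\Aut(\Gamma_u)}(K')$ is itself a subgroup of $\Aut(\Gamma_u)$ that contains the regular subgroup $K$. The first step is to note that, because $A$ contains $K$ acting regularly on the vertex set, $A$ is vertex-transitive, so $|A|=(q^2)\cdot|A_\varepsilon|$, and it suffices to show $|A_\varepsilon|\leq 2$. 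Next I would restrict attention to $A_\varepsilon$ and its action on the neighborhood $\Omega_u\cup\Omega_{u+1}$ of $\varepsilon$. Here is where the centralizing hypothesis bites: an element of $A_\varepsilon$ commutes with every element of $K'$, and each nontrivial element of $K'=\{1\}\cup\Omega_\infty$ is a central involution of $K$ acting on the vertex set by right translation. The key point is that centralizing these specific translations severely restricts how $A_\varepsilon$ can permute $\Omega_u\cup\Omega_{u+1}$: the only graph automorphisms fixing $\varepsilon$ and commuting with the $K'$-translations are those that respect the coset structure modulo $K'$. Concretely, I expect that centralizing $K'$ forces any $\sigma\in A_\varepsilon$ to act on $\Omega_u\cup\Omega_{u+1}$ as an automorphism of $GPG(q-1,k)$ that additionally commutes with the translation action of $K'$ on the two $(q-1)$-cycles; combined with Corollary~\ref{co:0stab1}, which pins $A_\varepsilon$ inside a solvable group of order dividing $4(q-1)$ with a unique cyclic normal subgroup of order $q-1$, this should collapse $|A_\varepsilon|$ to at most $2$. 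Finally, for the last clause of (i), I would argue that an element of $A\setminus K$ cannot preserve $\Omega_u$ and $\Omega_{u+1}$ individually: if it did, it would normalize the relevant structure and lie in a subgroup forcing it into $K$ (using that $H$ does not centralize $K'$ by Lemma~\ref{lm:GHKprops}(iv), so the $HK$-action distinguishes the two orbits), hence any nontrivial coset representative of $A/K$ must swap $\Omega_u$ and $\Omega_{u+1}$.

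\emph{For part (ii).} Let $\alpha$ be an involution centralizing $H$. Since $H$ fixes $\varepsilon$ and acts on $\Omega_u\cup\Omega_{u+1}$ as the cyclic group of order $q-1$ from Corollary~\ref{co:0stab1}, I would first show $\alpha$ fixes $\varepsilon$: because $H\leq\Aut(\Gamma_u)_\varepsilon$ and $\alpha$ commutes with $H$, the point $\alpha(\varepsilon)$ is fixed by all of $H$, but in a Frobenius action only $\varepsilon$ itself is fixed by the full point stabilizer $H$, so $\alpha(\varepsilon)=\varepsilon$. Thus $\alpha\in\Aut(\Gamma_u)_\varepsilon$ and stabilizes the neighborhood $\Omega_u\cup\Omega_{u+1}$. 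The decisive step is to analyze $\alpha$ as an element of $A$ from Corollary~\ref{co:0stab1}: it commutes with the cyclic normal subgroup of order $q-1$ induced by $H$. By Lemma~\ref{lm:GPGparam}(ii), since $k\neq\pm1$ (noted after Proposition~\ref{pr:GPG}), \emph{no} involution of $\Aut(GPG(q-1,k))$ commutes with the cyclic normal subgroup of order $q-1$. Therefore the image of $\alpha$ in $A$ must be the identity, i.e.\ $\alpha$ acts trivially on $\Omega_u\cup\Omega_{u+1}$, which is exactly the claim that $\alpha$ fixes the neighborhood point-wise.

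The main obstacle I anticipate is the first step of part (i): making precise the claim that centralizing the translations by $K'$ forces $A_\varepsilon$ to respect the fibers over $K/K'$ and hence to have order at most $2$. The subtlety is that $A_\varepsilon$ consists of graph automorphisms, not group automorphisms, so I cannot directly use the group-theoretic structure of $K$; I must translate the commutation condition into a statement about how $A_\varepsilon$ permutes the $q-1$-cycles of the Petersen structure and their $K'$-translates. The cleanest route is probably to show that the $K'$-orbits on the neighborhood coincide with the fibers of the quotient $\Gamma_u\to\Gamma_u/K'$, so that any automorphism centralizing $K'$ descends to the quotient graph; then the bound $|A_\varepsilon|\leq2$ follows from a dimension-count on the quotient combined with the rigidity already supplied by Corollary~\ref{co:0stab1}. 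Part (ii), by contrast, should be essentially immediate once the Frobenius fixed-point argument places $\alpha$ in the stabilizer, since Lemma~\ref{lm:GPGparam}(ii) does all the remaining work.
```
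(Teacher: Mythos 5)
Your part (ii) is correct and is essentially the paper's own argument: commuting with $H$ forces $\alpha(\varepsilon)=\varepsilon$ because $\varepsilon$ is the unique fixed point of $H$, and then Lemma~\ref{lm:GPGparam}(ii) (usable since $k\neq\pm1$) forces the restriction of $\alpha$ to $\Omega_u\cup\Omega_{u+1}$ to be trivial.

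Part (i), however, has a genuine gap at exactly the step you flag as the ``main obstacle,'' and your proposed workaround does not close it. First, a misconception: $K'$ does not act on $\Omega_u\cup\Omega_{u+1}$ at all. The $K'$-orbits on the vertex set are the cosets $K'\varPhi_{a,c}=\{\varPhi_{a,d}\mid d\in\bbF_q\}$, and each such coset with $a\neq0$ meets the neighborhood of $\varepsilon$ in exactly two vertices (one from $\Omega_u$, one from $\Omega_{u+1}$); so there is no ``translation action of $K'$ on the two $(q-1)$-cycles'' for $A_\varepsilon$ to commute with. Second, you cannot deduce $|A_\varepsilon|\leq 2$ from Corollary~\ref{co:0stab1}: that corollary bounds only the permutation group \emph{induced} by the stabilizer on the neighborhood, and at this stage nothing rules out a nontrivial kernel of that restriction. (Faithfulness of the neighborhood action is established only in the final section, via Lemma~\ref{lm:Eistrivial}, which itself depends on the present lemma -- so leaning on it here would be circular.) A ``dimension-count on the quotient $\Gamma_u/K'$'' is likewise not an argument; the whole difficulty is to bound the kernel of the action on that quotient.

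What is missing is the concrete incidence input that the paper uses. Namely: (a) since $\alpha\in A_\varepsilon$ commutes with $K'$, which acts transitively on the orbit $\varepsilon^{K'}=\{\varepsilon\}\cup\Omega_\infty$, and $\alpha$ fixes $\varepsilon$, the standard semiregularity of centralizers of transitive groups forces $\alpha$ to fix $\{\varepsilon\}\cup\Omega_\infty$ point-wise; (b) the incidences $\varPhi_{a,ua^{q_0+1}}\edge{u}\varPhi_{0,d}\iff d=0$ and $\varPhi_{a,ua^{q_0+1}}\edge{u+1}\varPhi_{0,d}\iff d=a^{q_0+1}$ show that each vertex of $\Omega_u$ has a unique neighbour in $\Omega_\infty$, and bijectively so, since $x\mapsto x^{q_0+1}$ is a bijection of $\bbF_q^*$; hence if $\alpha$ preserves $\Omega_u$ setwise it must fix $\Omega_u$ point-wise; (c) every $K'$-coset with $a\neq0$ contains exactly one element of $\Omega_u$, so $\alpha$ fixes a point in every $K'$-orbit, and semiregularity applied orbit by orbit gives $\alpha=\id$. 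Combining this with the dichotomy of Corollary~\ref{co:0stab1} (each element of $A_\varepsilon$ preserves $\Omega_u$ and $\Omega_{u+1}$ or swaps them) yields $|A_\varepsilon|\leq2$ with any nontrivial element swapping the two sets, hence $|A:K|\leq2$ and the final clause of (i). Your sketch of that final clause (``it would normalize the relevant structure and lie in a subgroup forcing it into $K$'') is not a proof either; it falls out of the same argument once (a)--(c) are in place.
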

\begin{proof}
(i) Obvoiusly, $K\leq A$ and $A$ is transitive. From the last sentence of Corollary \ref{co:0stab1}, an element $\alpha \in A_\varepsilon$ either preserves $\Omega_u$ and $\Omega_{u+1}$, or it interchanges them. We show that if $\alpha$ preserves $\Omega_u$ then $\alpha=\id$. This will imply $|A_\varepsilon|\leq 2$ and $|A|\leq 2q^2$. Since $\alpha$ commutes with $K'$ and fixes $\varepsilon$, it fixes all points in the orbit $\varepsilon^{K'}=\{\varepsilon\}\cup \Omega_\infty$. The elements $\varPhi_{a,ua^{q_0+1}} \in \Omega_u$ and $\varPhi_{0,d} \in K'$ satisfy both relations
\begin{alignat*}{2}
\varPhi_{a,ua^{q_0+1}} &\edge{u} \varPhi_{0,d} && \Longleftrightarrow d=0,  \\
\varPhi_{a,ua^{q_0+1}} &\edge{u+1} \varPhi_{0,d} && \Longleftrightarrow d=a^{q_0+1}.
\end{alignat*}
This means that each element in $\Omega_u$ is connected with a unique element in $\Omega_\infty$. Hence, $\alpha$ fixes all elements in $\Omega_u$. As each $K'$-orbit contains a unique element in $\Omega_u$, we see that each $K'$-orbit is preserved. Once again, $\alpha$ commutes with $K'$ and fixes an element in each $K'$-orbit. Therefore, $\alpha$ fixes all points in each $K'$-orbit.

(ii) As $\varepsilon$ is the unique fixed point of $H$, $\varepsilon^\alpha=\varepsilon$ and $\alpha$ leaves the neighborhood $\Omega_u \cup \Omega_{u+1}$ of $\varepsilon$ invariant. By Lemma \ref{lm:GPGparam}(ii), the restriction of $\alpha$ to $\Omega_u \cup \Omega_{u+1}$ cannot have order $2$, therefore, it must be trivial.
\end{proof}

\section{Imprimitivity}

In this section we show that an appropriate choice of $u\in \bbF_q$ ensures that $\Aut(\Gamma_u)$ cannot act primitively on the set of vertices of $\Gamma_u$. We recall that a primitive permutation group $G$ is of \emph{affine type} if it has an abelian regular normal subgroup, which is necessarily elementary abelian of order $r^n$ for some prime $r$. In this case $G$ is embedded in the affine group $AGL(n,r)$ with the socle being the translation subgroup. Its stabiliser of $0\in \bbF_r^n$ is a subgroup of $GL(n,r)$ which acts irreducibly on $\bbF_r^n$. For our purpuse, a useful tool is the following result by Guralnick and Saxl.

\begin{proposition}[Guralnick and Saxl \cite{GuSa}] \label{prop:GS}
Let $G$ be a primitive permutation group of degree $2^n$. Then either $G$ is of affine
type, or $G$ has a unique minimal normal subgroup $N=S\times \cdots \times S = S^t$, $t\ge
1$, $S$ is a non-abelian simple group, and one of the following holds:
\begin{enumerate}
\item[(i)] $S=A_m$, $m=2^e\geq 8$, $n=te$, and the $1$-point stabilizer in $N$ is $N_1 =
A_{m-1}\times \cdots \times A_{m-1}$, or
\item[(ii)] $S=PSL(2,p)$, $p = 2^e -1\geq 7$ is a Mersenne prime, $n=te$,
and the $1$-point stabilizer in $N$ is the direct product of maximal parabolic subgroups
each stabilizing a $1$-space.
\end{enumerate}
\end{proposition}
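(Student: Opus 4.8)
The plan is to derive this classification from the O'Nan--Scott theorem together with the deep input of the Classification of Finite Simple Groups. Write $N=\soc(G)$. The O'Nan--Scott theorem splits the analysis according to whether $N$ is abelian or not. If $N$ is elementary abelian, then it is a regular normal subgroup and its order equals the degree $2^n$, forcing the prime to be $2$; this is precisely the affine alternative. In the remaining cases $N=S\times\cdots\times S=S^t$ with $S$ a non-abelian simple group, and $G$ lies in one of the almost simple, product action, simple diagonal, twisted wreath, or holomorphic types. The first task is therefore to survive these non-abelian types while keeping the degree equal to a power of $2$.

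The next step is a divisibility argument that eliminates all but two of the non-abelian types. By Burnside's $p^aq^b$ theorem, the order of a non-abelian simple group is divisible by at least three distinct primes, so $|S|$ is never a power of $2$. In the simple diagonal type the degree is $|S|^{t-1}$, in the twisted wreath type it is $|S|^t$, and in the two holomorphic types a minimal normal subgroup acts regularly, making the degree a positive power of $|S|$; in each case the degree is divisible by $|S|$ and hence cannot equal $2^n$. This leaves only the almost simple type, where $N=S$ acts transitively with $|S:S_1|=2^n$, and the product action type, where $N=S^t$ acts on $\Delta^t$ with $\Delta$ a primitive $S$-set of degree $|\Delta|=2^{n/t}$ (the base cannot itself be of diagonal type, again by the divisibility of its degree by $|S|$). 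In every surviving case $S$ contains a subgroup of $2$-power index.

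At this point I would invoke Guralnick's classification of finite simple groups possessing a subgroup of prime-power index. Specializing the prime to $2$, the only possibilities are $S=A_m$ with $m=2^e$, realized by $A_{m-1}$ of index $m$, and $S=PSL(2,p)$ with $p=2^e-1$ a Mersenne prime, realized by a point stabilizer of index $p+1=2^e$ in the action on the projective line. Propagating these two base actions through the product action construction recovers the stated one-point stabilizers: $N_1=A_{m-1}\times\cdots\times A_{m-1}$ in case (i) and the direct product of maximal parabolic subgroups in case (ii), with $n=te$ in both instances. Together with the affine alternative, this exhausts all possibilities.

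The \emph{main obstacle} is entirely concentrated in the appeal to Guralnick's theorem, which is not elementary: it depends on the Classification of Finite Simple Groups and requires a case-by-case examination of the alternating, classical, exceptional, and sporadic families to confirm that no subgroup of $2$-power index occurs beyond the two listed families. By contrast, the O'Nan--Scott reduction and the Burnside divisibility elimination are routine bookkeeping. Thus the genuine content of the proposition, and the only place where deep structural input cannot be avoided, is the enumeration of simple groups with a subgroup of prime-power index.
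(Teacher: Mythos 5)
This proposition is not proved in the paper at all: it is quoted as an external result of Guralnick and Saxl \cite{GuSa}, so the only meaningful comparison is with the proof in that reference, which your outline reproduces essentially verbatim in structure --- the O'Nan--Scott reduction, the elimination of the diagonal, twisted wreath and holomorphic types because their degrees are divisible by $|S|$ (not a $2$-power by Burnside), and then Guralnick's CFSG-based classification of simple groups having a subgroup of prime-power index, specialized to the prime $2$. Your sketch is correct; the only point left implicit is that uniqueness of the minimal normal subgroup follows from having ruled out the holomorphic types (the only types with two minimal normal subgroups), which is worth one extra line but does not change the argument.
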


\begin{lemma}\label{lm:capT}
Let $G$ be a group acting transitively on the set $X$. For $x\in X$ and let $H=G_x$ be the stabilizer of $x$ in $G$.
\begin{enumerate}[(i)]
\item For $y\in X$, choose $g\in G$ such that $y=x^g$. Then the subgroup of $H$, fixing the $H$-orbit of $y$ point-wise, coincides with $\cap_{h\in H} H^{gh}$.
\item If $G$ is $2$-transitive on $X$ then $\cap_{h\in H} H^{gh}$ is either $H$ or $\{1\}$, depending upon whether $g\in H$ or $g\not\in H$.
\end{enumerate}
\end{lemma}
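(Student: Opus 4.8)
The plan is to reduce everything to the standard transformation law for point stabilizers under a transitive action, and then to read the symbol $H^{gh}$ as a stabilizer. Concretely, I would first record that if $G$ is transitive with $H=G_x$, then for any $g\in G$ the stabilizer of the point $x^{g}$ is $G_{x^{g}}=g^{-1}Hg$; with the convention $H^{g}=g^{-1}Hg$ this reads $G_{x^{gh}}=H^{gh}$. The single thing to keep aligned throughout is this convention, so that $H^{gh}$ really is the stabilizer of $x^{gh}$ and not of $x^{(gh)^{-1}}$.

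For part (i) I would then simply describe the $H$-orbit of $y$ and compute its pointwise stabilizer inside $H$. Since $y=x^{g}$ and $H$ acts on the right, the orbit is $O=y^{H}=\{x^{gh}\mid h\in H\}$. An element $h'\in H$ fixes $O$ pointwise exactly when it fixes every $x^{gh}$, i.e.\ when $x^{gh h'}=x^{gh}$ for all $h\in H$; unwinding this gives the chain
\[
x^{gh h'}=x^{gh}\iff gh\,h'\,h^{-1}g^{-1}\in H\iff h'\in (gh)^{-1}H(gh)=H^{gh}.
\]
Hence the subgroup of $H$ fixing $O$ pointwise is precisely the set of $h'\in H$ lying in $H^{gh}$ for every $h\in H$, that is, $\bigcap_{h\in H}H^{gh}$. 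The two inclusions are immediate once the orbit has been identified, so this part is pure bookkeeping; the only point worth stating explicitly is that taking $h=\id$ already places such an $h'$ in $H^{g}$.

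For part (ii) I would feed the $2$-transitivity of $G$ into the description of $O$ obtained in (i). If $g\in H$ then $y=x^{g}=x$, the orbit is $\{x\}$, and $\bigcap_{h\in H}H^{gh}=\bigcap_{h\in H}H^{h}=H$, because conjugating $H$ by its own elements returns $H$. If $g\notin H$ then $y\neq x$, and $2$-transitivity of $G$ is equivalent to transitivity of the stabilizer $H=G_x$ on $X\setminus\{x\}$; therefore the $H$-orbit of $y$ is all of $X\setminus\{x\}$. By part (i) the intersection is then the subgroup of $H$ fixing every point of $X\setminus\{x\}$, and a permutation fixing all points except possibly $x$ must fix $x$ as well, so this subgroup is $\{1\}$. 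This yields the stated dichotomy. The only step that needs care is the equivalence between $2$-transitivity of $G$ and transitivity of $H$ on $X\setminus\{x\}$, which is the classical characterization of $2$-transitivity; beyond that the lemma is just a translation of orbit--stabilizer bookkeeping into the conjugacy notation used later.
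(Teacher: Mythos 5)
Your proof is correct and takes essentially the same route as the paper's: both identify the points of the $H$-orbit of $y$ as the $x^{gh}$, $h\in H$, recognize $H^{gh}$ as the stabilizer of $x^{gh}$, and conclude that the pointwise stabilizer is $\cap_{h\in H}H^{gh}$, then settle (ii) by noting that $g\in H$ gives the intersection $H$, while for $g\notin H$ two-transitivity makes the orbit all of $X\setminus\{x\}$, forcing the intersection to be trivial. The only cosmetic difference is that you check membership element-by-element for $h'\in H$, whereas the paper intersects the point stabilizers $G_{y'}$ directly; the content is identical.
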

\begin{proof}
If $y'\in y^H$, then $y'=y^h=x^{gh}$ for some $h\in H$. Hence, for the stabilizer we have $G_{y'}=G_x^{gh}=H^{gh}$. Therefore, the point-wise stabilizer of $y^H$ is $\cap_{y'\in y^H} G_{y'}=\cap_{h\in H} H^{gh}$. This proves (i). Clearly, if $g\in H$ then $\cap_{h\in H} H^{gh}=H$. If $g\in G\setminus H$ then $x\neq y=x^g$ and $\cap_{h\in H} H^{gh}$ fixes all points in $\{x\}\cup y^H$. The latter set is $X$ if $G$ is $2$-transitive.
\end{proof}

\begin{lemma}\label{lm:onlyaffine}
Assume $\gcd(q-1,q_0^2-1)=1$ and $u\in \mathscr{U}_{q,q_0}$. If $\Aut(\Gamma_u)$ acts primitively on $\Gamma_u$, then its action is of affine type.
\end{lemma}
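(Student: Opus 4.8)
The plan is to invoke the Guralnick--Saxl dichotomy (Proposition \ref{prop:GS}) and eliminate the two non-affine possibilities by a single uniform argument. Since the vertex set of $\Gamma_u$ is $K$, a primitive $\Aut(\Gamma_u)$ would be a primitive group of degree $|K|=q^2=2^{2f}$, so Proposition \ref{prop:GS} applies with $n=2f$: either $\Aut(\Gamma_u)$ is of affine type, as desired, or it has a unique minimal normal subgroup $N=S^t$ with $S$ non-abelian simple, acting in product action on $\Delta^t$ with point stabiliser $N_\varepsilon=(S_\delta)^t$, where $\Delta$ is the set on which $S$ acts and $S_\delta=A_{m-1}$ in case (i), while $S_\delta$ is a point stabiliser (Borel subgroup) of $S=PSL(2,p)$ on the projective line in case (ii). The aim is to contradict both non-affine cases.

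The crux is that $N_\varepsilon$ admits no non-trivial $2$-group quotient, i.e. $O^2(N_\varepsilon)=N_\varepsilon$; since $O^2$ commutes with direct products it suffices to check this for a single factor $S_\delta$. In case (i) one has $m-1=2^e-1\ge 7$, so $S_\delta=A_{m-1}$ is non-abelian simple, hence perfect. In case (ii) the Borel subgroup is a Frobenius group $S_\delta\cong C_p\rtimes C_{(p-1)/2}$ with unipotent kernel $C_p$; as $p=2^e-1$ we get $(p-1)/2=2^{e-1}-1$, which is \emph{odd}, so the abelianisation of $S_\delta$ has odd order and $S_\delta$ has no non-trivial $2$-group quotient. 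Now bring in the local structure: by Corollary \ref{co:0stab1} the group $A$ induced by $\Aut(\Gamma_u)_\varepsilon$ on the neighbourhood $\Omega_u\cup\Omega_{u+1}$ has a cyclic normal subgroup $C$ of odd order $q-1$ with $A/C$ a $2$-group; let $P$ be the kernel of this action (the pointwise stabiliser of $\Omega_u\cup\Omega_{u+1}$). Then $\overline{N_\varepsilon}=N_\varepsilon P/P\le A$, and since $\overline{N_\varepsilon}$ is a quotient of $N_\varepsilon$ it has no non-trivial $2$-group quotient, whereas $\overline{N_\varepsilon}/(\overline{N_\varepsilon}\cap C)$ embeds into the $2$-group $A/C$; hence $\overline{N_\varepsilon}\le C$. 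In particular $\overline{N_\varepsilon}$ is abelian, so $[N_\varepsilon,N_\varepsilon]\le P$ fixes every neighbour of $\varepsilon$.

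The contradiction then comes from a fixed-point count in the product action. One has $[N_\varepsilon,N_\varepsilon]=([S_\delta,S_\delta])^t$, and in both cases $[S_\delta,S_\delta]$ remains transitive on $\Delta\setminus\{\delta\}$: in case (i) $[S_\delta,S_\delta]=A_{m-1}$ is transitive on the remaining $m-1\ge 5$ points, while in case (ii) $[S_\delta,S_\delta]=C_p$ is the unipotent radical, acting as a single $p$-cycle on $\Delta\setminus\{\delta\}$. Thus $\Fix_\Delta([S_\delta,S_\delta])=\{\delta\}$, and in the product action the common fixed-point set of $[N_\varepsilon,N_\varepsilon]$ on $\Delta^t$ is exactly $\{\varepsilon\}$. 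But $\varepsilon$ has $|\Omega_u\cup\Omega_{u+1}|=2(q-1)>0$ neighbours, all different from $\varepsilon$, and $[N_\varepsilon,N_\varepsilon]\le P$ fixes each of them pointwise, contradicting that its only common fixed point is $\varepsilon$. Hence neither non-affine case can occur, and a primitive $\Aut(\Gamma_u)$ must be of affine type.

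The step I expect to be the main obstacle is the passage from the abstract data of Proposition \ref{prop:GS} to a usable product-action description, together with the uniform verification that $N_\varepsilon=O^2(N_\varepsilon)$. Case (i) is immediate from perfectness, but case (ii) hinges on the arithmetic fact that $(p-1)/2=2^{e-1}-1$ is odd for a Mersenne prime $p=2^e-1$, which is precisely what pushes $\overline{N_\varepsilon}$ into the odd cyclic part $C$ of the local action and so traps $[N_\varepsilon,N_\varepsilon]$ inside $P$. Once this is in place, the remaining bookkeeping — that $[N_\varepsilon,N_\varepsilon]$ keeps $\varepsilon$ as its unique common fixed point — is a routine computation in the product action.
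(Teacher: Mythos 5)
Your proof is correct, and while it shares the paper's skeleton --- the Guralnick--Saxl dichotomy (Proposition \ref{prop:GS}) combined with the constraint on the local action from Corollary \ref{co:0stab1} --- the mechanism by which you reach the contradiction is genuinely different. The paper proves an auxiliary result (Lemma \ref{lm:capT}) exploiting the $2$-transitivity of $S$ on $\Delta$: the pointwise stabilizer of any $N_\varepsilon$-orbit is a subproduct $B^{t_0}$, so the group induced by $N_\varepsilon=B^t$ on any nontrivial orbit inside $\Omega_u\cup\Omega_{u+1}$ is a full direct power $B^{t_1}$ with $t_1\ge 1$; this is then nonsolvable in case (i) and noncyclic of odd order in case (ii), both incompatible with Corollary \ref{co:0stab1}. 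You bypass Lemma \ref{lm:capT} entirely: the observation $O^2(N_\varepsilon)=N_\varepsilon$ (perfectness of $A_{m-1}$ in case (i), oddness of $|B|=p(p-1)/2$ for a Mersenne prime in case (ii)) traps the image of $N_\varepsilon$ inside the odd cyclic group $C$, hence $[N_\varepsilon,N_\varepsilon]$ fixes the neighborhood pointwise, and your fixed-point count in the product action (using that $[B,B]$ is transitive on $\Delta\setminus\{\delta\}$ in both cases) gives $\Fix([N_\varepsilon,N_\varepsilon])=\{\varepsilon\}$, a contradiction since $[B,B]^t\neq\{1\}$ and $\varepsilon$ has $2(q-1)$ neighbors. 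Your route has two advantages: the final contradiction is uniform across the two cases, and it does not presuppose that $N_\varepsilon$ has a nontrivial orbit on $\Omega_u\cup\Omega_{u+1}$ --- a point the paper leaves implicit (if all such orbits were trivial, Corollary \ref{co:0stab1} yields no contradiction; one would need the standard connectivity argument forcing $N_\varepsilon=\{1\}$ and then an order parity contradiction with $|S^t|$). What the paper's approach buys in exchange is a clean, reusable lemma and finer structural information about the induced group on each orbit.
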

\begin{proof}
Let us assume on the contrary that $\Aut(\Gamma_u)$ is not of affine type. Let $N$ be its unique minimal normal subgroup. With the notation in Proposition \ref{prop:GS}, we have $N=S^t$ where either $S=A_m$, $m\geq 8$, or $S=PSL(2,p)$, with a Mersenne prime $p=m-1\geq 7$. In both cases, $S$ has a $2$-transitive action on $m$ points. Moreover, if $B$ is the $1$-point stabilizer in $S$, then the point stabilizer of $\varepsilon=\varPhi_{0,0}$ in $N$ is $N_\varepsilon=B^t$. For $(g_1,\ldots,g_t)\in S^t$ take a generic vertex $y=\varepsilon^{(g_1,\ldots,g_t)}$ of $\Gamma_u$. Let $Y$ be the $B^t$-orbit of $y$. By Lemma \ref{lm:capT}(i) the point-wise stabilizer of $Y$ is
\[(\cap_{b\in B} B^{g_1b}) \times \cdots \times (\cap_{b\in B} B^{g_tb}).\]
By Lemma \ref{lm:capT}(ii), each factor is either $\{1\}$ or $B$, depending upon whether $g_i \in B$ or not. Thus, the point-wise stabilizer of $Y$ in $B^t$ is $B^{t_0}$, where $0 \leq t_0\leq t$, and $t_0=t$ occurs if and only if $Y=\{\varepsilon\}$. Therefore, the $B^t$ induces a permutation group on $Y$ which is isomorphic to $B^{t_1}$, where $t_1=t-t_0$. Furthermore, $t_1=0$ if and only if $Y=\{\varepsilon\}$.

The stabilizer $N_\varepsilon$ acts on $\Omega_u \cup \Omega_{u+1}$. Let $Y$ be a nontrivial $N_\varepsilon$-orbit contained in $\Omega_u \cup \Omega_{u+1}$. If $S=A_m$, then $N_\varepsilon$ induces a nonsolvable group of automorphisms of $\Omega_u \cup \Omega_{u+1}$. If $S=PSL(2,p)$, then $|B|=p(p-1)/2$, and $N_\varepsilon$ induces a noncyclic group of odd order on $\Omega_u \cup \Omega_{u+1}$. Both possibilities are inconsistent with Corollary \ref{co:0stab1}.
\end{proof}

We are now able to prove the imprimitivity of $\Aut(\Gamma_u)$. 

\begin{proposition} \label{pr:imprimitive}
Assume $\gcd(q-1,q_0^2-1)=1$ and $u\in \mathscr{U}_{q,q_0}$. Then, $\Aut(\Gamma_u)$ acts imprimitively on $\Gamma_u$.
\end{proposition}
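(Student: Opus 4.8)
The plan is to argue by contradiction, turning primitivity against the local structure already established. Assume $\Aut(\Gamma_u)$ is primitive. By Lemma \ref{lm:onlyaffine} its action is then of affine type, so it has an elementary abelian regular normal subgroup $N$ (its socle) of order $|K|=q^2=2^{2f}$, and the point stabilizer $P:=\Aut(\Gamma_u)_\varepsilon$ is a subgroup of $GL(N)\cong GL(2f,2)$ acting irreducibly on $N$. I will write $N$ additively as an $\mathbb F_2$-space, so that $\Aut(\Gamma_u)=N\rtimes P$ and $P$ consists of linear maps fixing $\varepsilon=0$ and preserving the connection set $T:=\Omega_u\cup\Omega_{u+1}$, which is the neighbourhood of $\varepsilon$.

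The decisive step is to bound $|P|$. Since $\Gamma_u$ is connected (Lemma \ref{lm:GHKprops}(iii)), the set $T$ generates $N$ and hence spans it over $\mathbb F_2$; therefore a linear element of $P$ fixing $T$ pointwise fixes all of $N$ and is trivial. Thus $P$ acts \emph{faithfully} on $T$, so Corollary \ref{co:0stab1} applies verbatim to $P$ and forces $|P|\in\{q-1,\,2(q-1),\,4(q-1)\}$. In particular the Sylow $2$-subgroups of $P$ have order at most $4$.

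Next I compare $N$ with the regular non-abelian group $K$. Writing each $k\in K\le N\rtimes P$ uniquely as $k=n_kp_k$ with $n_k\in N$ and $p_k\in P$, the map $k\mapsto p_k$ is a homomorphism $K\to P$ with kernel $K\cap N$, so $K/(K\cap N)$ embeds as a $2$-subgroup of $P$ and hence has order at most $4$ by the previous step. Consequently $|K\cap N|\ge q^2/4$. On the other hand $K\cap N$ is a subgroup of the elementary abelian group $N$, so every non-identity element of $K\cap N$ is an involution of $K$; but from $\varPhi_{a,c}^2=\varPhi_{0,a^{q_0+1}}$ the only involutions of $K$ are the non-trivial elements of $K'=Z(K)=\{1\}\cup\Omega_\infty$ (Lemma \ref{lm:GHKprops}(i),(ii)). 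Hence $K\cap N\le K'$ and $|K\cap N|\le q$. Together these give $q^2/4\le q$, i.e. $q\le 4$, contradicting $q=2^f$ with $f\ge 4$. Therefore $\Aut(\Gamma_u)$ cannot act primitively.

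The substantive work, and the place where the hypothesis $u\in\mathscr U_{q,q_0}$ is actually spent, is the order bound $|P|\le 4(q-1)$: it rests on Corollary \ref{co:0stab1}, which in turn relies on the neighbourhood of $\varepsilon$ being exactly the generalized Petersen graph $GPG(q-1,k)$ (Proposition \ref{pr:GPG}), a description that uses Condition (U2) to rule out the stray cross-edges of \eqref{eq6}. Once $|P|$ is pinned down, the contradiction is purely structural: it exploits the defining feature of the Suzuki $2$-group $K$ — all of its involutions are central — so that no elementary abelian (hence involution-only) subgroup as large as $K\cap N$ can sit inside $K$. I expect the only delicate point to be the faithfulness of $P$ on $T$, since that is precisely what upgrades the local bound of Corollary \ref{co:0stab1} to a global bound on $|P|$; everything after that is arithmetic.
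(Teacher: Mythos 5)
Your proof is correct, and after the first step it takes a genuinely different route from the paper's. Both arguments begin identically, reducing to the affine case via Lemma \ref{lm:onlyaffine}, but then they part ways. The paper works with the $H$-module structure of the socle $N$: it decomposes $N=A_1\times A_2$ using the fixed-point-free action of $H$ on $N$, considers the normalizer $M=N_{NK}(K)$ inside the nilpotent $2$-group $NK$, shows $K'\le Z(M)$ and $|M:K|=2$ via Lemma \ref{lm:0stab2}(i), and finally applies Maschke's theorem to extract an $H$-invariant subgroup $B\le N$ of order $2$ whose involution would centralize $H$ --- contradicting the fact that no nontrivial element of $N$ commutes with a nontrivial element of $H$. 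You instead exploit connectivity (Lemma \ref{lm:GHKprops}(iii)): since the stabilizer $P=\Aut(\Gamma_u)_\varepsilon$ acts on the socle $N$ by linear maps and the connection set spans $N$ over $\mathbb{F}_2$, the action of $P$ on the neighbourhood of $\varepsilon$ is faithful, so Corollary \ref{co:0stab1} yields the global bound $|P|\in\{q-1,2(q-1),4(q-1)\}$, whose $2$-part is at most $4$; the projection $K\to P$ along $N$ then forces $|K\cap N|\ge q^2/4$, while the Suzuki-type property that all involutions of $K$ lie in $K'$ (Lemma \ref{lm:GHKprops}(i),(ii) together with $\varPhi_{a,c}^2=\varPhi_{0,a^{q_0+1}}$) forces $|K\cap N|\le q$, giving $q\le 4$, absurd since $f\ge 4$. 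Your route is more elementary --- no Maschke, no nilpotency/normalizer argument, no use of Lemma \ref{lm:0stab2} --- and your faithfulness observation (connectivity plus linearity upgrades the local bound of Corollary \ref{co:0stab1} to a bound on the full vertex stabilizer) is a clean, reusable device for any Cayley graph admitting an affine primitive automorphism group. What the paper's argument buys in exchange is independence from the connectivity of $\Gamma_u$ and a contradiction located directly in the interplay of $H$, $K$ and $N$; but as a proof of the stated proposition, yours is complete and, if anything, shorter.
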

\begin{proof}
As before, $G$ is identified with its permutation action on $\Gamma_u$. In particular, we consider $H$, $K$ as subgroups of $\Aut(\Gamma_u)$. At the same time, $K$ is the set of vertices of $\Gamma_u$.

Assume on the contrary that $\Aut(\Gamma_u)$ is primitive, hence of affine type by Lemma \ref{lm:onlyaffine}. Let $N$ be the unique minimal normal subgroup of $\Aut(\Gamma_u)$. Then $N$ is a regular elementary abelian $2$-group. Since $H$ has odd order, $N$ decomposes into the direct product of $H$-invariant subgroups. For any $1\neq h \in H$ and $1\neq n\in N$, $h$ has a unique fixed point, while $n$ has no fixed point. Hence $nh\neq hn$. Therefore $N=A_1\times A_2$ where $A_i$ is an elementary abelian group of order $q$ and $H$ acts regularly on $A_i\setminus \{1 \}$, $i=1,2$. Consider the subgroup $M=N_{NK}(K)$. Since $NK$ is nilpotent, we have $K\lneq M$ and $K'\triangleleft M$. The latter implies $K'\cap Z(M) \neq \{1\}$. Since both $K'$ and $Z(M)$ are $H$-invariant while $H$ acts regularly on $K'\setminus\{1\}$, we have $K'\leq Z(M)$. By Lemma \ref{lm:0stab2}, $|M:K|=2$. On the one hand, $M=(M\cap N)K$. On the other hand, $N\cap K$ is an $H$-submodule of $M\cap N$. By Maschke's Theorem \cite{CurtisReiner}*{(10.8)} applied to $M\cap N$, viewed as a $\bbF_2$-vector space, there is an $H$-invariant subgroup $B$ in $M\cap N$ such that $M\cap N = B \times (N\cap K)$. Therefore,
\[B \cong (M\cap N) / (N \cap K) \cong (M\cap N)K/K \cong = M/K \cong \bbF_2,\]
that is, the nontrivial element of $B\leq N$ commutes with $H$, a contradiction.
\end{proof}

\section{Proof of the main result Theorem \ref{thm:main}(ii)}

In this section, we complete the proof of Theorem \ref{thm:main}. As before, $G$ is identified with its permutation action on $\Gamma_u$. From Proposition \ref{pr:imprimitive}, we know that $A=\Aut(\Gamma_u)$ acts imprimitively on $\Gamma_u$. We claim that the only nontrivial blocks of imprimitivity of $A$ are the cosets of the commutator subgroup $K'$ of $K$. Or equivalently, $K'$ is the only nontrivial block containing $\varepsilon$. Let $B$ be an arbitrary nontrivial block of imprimitivity of $A$ which contains $\varepsilon$. Then the stabilizer of the set $B$ in $G$ is a subgroup $G_B$ of $G$, lying properly between $H$ and $G$. By Lemma \ref{lm:GHKprops}(v), $G_B=HK'$ and $B=K'$, which proves the claim. The next two lemmas describe the point-wise stabilizer of $K'$ in $A$.

\begin{lemma} \label{lm:ptstabK'}
Let $E$ be the point-wise stabilizer of $K'=\{\varepsilon\}\cup \Omega_\infty$ in $\Aut(\Gamma_u)$. Then $E$ is either trivial or it is an elementary abelian $2$-group which fixes all pairs $\{\varPhi_{a,c},\varPhi_{a,c}^{-1}\}$.
\end{lemma}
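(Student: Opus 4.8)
The plan is to force $\sigma\in E$ to preserve, vertex by vertex, the set of neighbours lying in the fixed set $K'$, and to show that this datum already determines each vertex up to inversion. The only input is that $\sigma$ fixes every element of $K'$ as a vertex: writing $N'(v)$ for the set of neighbours of $v$ that lie in $K'$, adjacency-preservation yields $N'(\sigma v)=\sigma(N'(v))=N'(v)$ for every vertex $v$, since $\sigma$ fixes $K'$ pointwise. Thus $\sigma$ can only send a vertex to another vertex with the same $K'$-neighbourhood, and the whole statement reduces to identifying the fibres of the map $v\mapsto N'(v)$.

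Next I would compute $N'(\varPhi_{a,c})$. For $a\neq 0$ and $e\in\bbF_q$ one has $\varPhi_{a,c}\varPhi_{0,e}^{-1}=\varPhi_{a,c+e}$, which lies in $\Omega_u\cup\Omega_{u+1}$ exactly when $c+e\in\{ua^{q_0+1},(u+1)a^{q_0+1}\}$; hence $N'(\varPhi_{a,c})=\{\varPhi_{0,\,c+ua^{q_0+1}},\,\varPhi_{0,\,c+(u+1)a^{q_0+1}}\}$, a two-element subset of $K'=\{\varepsilon\}\cup\Omega_\infty$ whose two members differ by $a^{q_0+1}$. (Vertices of $K'$ itself have no neighbour in $K'$, because $\Omega_\infty$ is disjoint from the connection set; these are fixed by $\sigma$ and constitute their own singleton pairs.) Then I would invert this correspondence: given a genuine two-element subset $\{\varPhi_{0,e_1},\varPhi_{0,e_2}\}$ of $K'$, any vertex $\varPhi_{a,c}$ realizing it as its $K'$-neighbourhood must satisfy $a^{q_0+1}=e_1+e_2\neq 0$, which pins down $a$ uniquely since $x\mapsto x^{q_0+1}$ is a bijection of $\bbF_q^*$ under the hypothesis $\gcd(q-1,q_0^2-1)=1$; the two admissible values of $c$ then produce two vertices whose second coordinates differ by $a^{q_0+1}$, that is, mutually inverse ones. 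Therefore the fibre of $v\mapsto N'(v)$ through $v=\varPhi_{a,c}$ is exactly the pair $\{v,v^{-1}\}$, and $\sigma$ maps each such pair to itself.

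Finally, once $\sigma$ fixes every pair $\{\varPhi_{a,c},\varPhi_{a,c}^{-1}\}$ setwise, its restriction to each such pair (a set of at most two elements) squares to the identity, so $\sigma^2=\id$; hence every element of $E$ has order dividing $2$. A group of exponent $2$ is automatically abelian, so $E$ is either trivial or elementary abelian, and by construction it fixes all the pairs. I expect the only genuine work to be the fibre identification of the second paragraph, namely verifying that ``having a prescribed two-element set of $K'$-neighbours'' characterises a vertex precisely up to inversion; the passage from pair-fixing to the elementary abelian conclusion is then immediate.
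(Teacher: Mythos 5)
Your proposal is correct and follows essentially the same route as the paper: both rest on the computation that a vertex $\varPhi_{a,c}$ with $a\neq 0$ has exactly two neighbours in $K'$, namely $\varPhi_{0,c+ua^{q_0+1}}$ and $\varPhi_{0,c+(u+1)a^{q_0+1}}$, and that this two-element set determines the vertex up to inversion (using that $x\mapsto x^{q_0+1}$ is bijective on $\bbF_q^*$), whence every element of $E$ preserves each pair $\{\varPhi_{a,c},\varPhi_{a,c}^{-1}\}$ and so has order at most $2$. Your fibre formulation of $v\mapsto N'(v)$ is just the dual phrasing of the paper's ``common neighbours of $\varPhi_{0,d_1}$ and $\varPhi_{0,d_2}$'' argument, with the minor cosmetic advantage of handling uniformly the case where one of the two $K'$-neighbours is $\varepsilon$.
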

\begin{proof}
By the observations made prior to Lemma \ref{lm:incideces} show
\[\varPhi_{a,c} \edge{u} \varPhi_{0,d} \Longleftrightarrow d=c+ua^{q_0+1}\]
for all $a,c,d \in \bbF_q$. Thus, any vertex $\varPhi_{a,c}$, $a\neq 0$, is $u$-connected to a unique element $\varPhi_{0,d_1}$ of $K'$ and $(u+1)$-connected to a unique element $\varPhi_{0,d_2}$ of $K'$, where $d_1=c+ua^{q_0+1}$ and $d_2=c+(u+1)a^{q_0+1}$. If $d_1$ and $d_2$ are distinct nonzero elements, then $\varPhi_{0,d_1}$, $\varPhi_{0,d_2}$ are distinct vertices in $\Omega_\infty$, whose common neighbors are $\varPhi_{a,c}$ and $\varPhi_{a,c}^{-1}=\varPhi_{a,c+a^{q_0+1}}$, where
\[a=(d_1+d_2)^{\frac{1}{q_0+1}} \quad \text{and}\quad c\in \{d_1+ua^{q_0+1}, d_2+ua^{q_0+1}\}.\]
This shows that any automorphism of $\Gamma_u$, which fixes $\Omega_\infty$ point-wise, must leave the pair $\{\varPhi_{a,c},\varPhi_{a,c}^{-1}\}$ invariant. It follows that $E$ either trivial or has exponent $2$ and in the latter case $E$ is elementary abelian.
\end{proof}

Actually, $E$ is trivial by the following lemma.

\begin{lemma} \label{lm:Eistrivial}
The only automorphism that fixes $\{\varepsilon\} \cup \Omega_\infty$ point-wise is the identity.
\end{lemma}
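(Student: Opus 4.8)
The plan is to suppose \(E\neq\{\id\}\) and derive a contradiction. By Lemma~\ref{lm:ptstabK'} I may pick an involution \(\alpha\in E\); it fixes \(\{\varepsilon\}\cup\Omega_\infty=K'\) pointwise and fixes each pair \(\{\varPhi_{a,c},\varPhi_{a,c}^{-1}\}\) setwise. Since \(\alpha(\varepsilon)=\varepsilon\), it permutes the neighbourhood \(\Omega_u\cup\Omega_{u+1}\) of \(\varepsilon\), which by Proposition~\ref{pr:GPG} carries the structure of \(GPG(q-1,k)\) with \(k\neq\pm1\). By the last sentence of Corollary~\ref{co:0stab1}, \(\alpha\) either preserves both \(\Omega_u\) and \(\Omega_{u+1}\), or it interchanges them, and I would treat the two cases separately.

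First I would eliminate the interchanging case. There, for \(x\in\Omega_u\) the partner \(x^{-1}\) is the unique element of \(\{x,x^{-1}\}\) lying in \(\Omega_{u+1}\) (since \(\Omega_u^{-1}=\Omega_{u+1}\)), so \(\alpha\) must restrict to the inversion map \(x\mapsto x^{-1}\) on \(\Omega_u\cup\Omega_{u+1}\). The cyclic subgroup \(C\) of order \(q-1\) induced by \(H\) acts there by conjugation, and inversion commutes with any conjugation because \((g^{-1}xg)^{-1}=g^{-1}x^{-1}g\). Thus inversion is an involution of \(GPG(q-1,k)\) centralising its cyclic normal subgroup \(C\) of order \(q-1\), which contradicts Lemma~\ref{lm:GPGparam}(ii) as \(k\neq\pm1\). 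Consequently \(\alpha\) fixes \(\Omega_u\cup\Omega_{u+1}\) pointwise.

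The remaining work is to propagate this fixing through \(\Gamma_u\). The local tool I would prove from the adjacency criterion preceding Lemma~\ref{lm:incideces} is: if \(v=\varPhi_{a,c}\) and \(w=\varPhi_{b,d}\) are adjacent with \(a,b\neq0\), then \(w^{-1}\) is \emph{not} adjacent to \(v\); indeed, both adjacencies would force \(b^{q_0+1}\in\{0,(a+b)^{q_0+1}\}\), hence \(a=0\) or \(b=0\). Together with pair-fixing this upgrades the fixing as follows: whenever a noncentral vertex \(v\) is fixed by \(\alpha\), every neighbour of \(v\) is fixed, because a neighbour in \(K'\) is fixed by hypothesis, while a noncentral neighbour \(w\) is carried by \(\alpha\) into \(\{w,w^{-1}\}\) and into the neighbourhood of the fixed vertex \(v\); since \(w^{-1}\) is not a neighbour of \(v\), this forces \(\alpha(w)=w\).

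Finally I would start from the seed \(\Omega_u\cup\Omega_{u+1}\subseteq\Fix(\alpha)\) and fill each coset of \(K'\). A short computation with the same criterion shows that, for distinct nonzero values \(a\) and \(b\), the bipartite graph between the corresponding cosets is a disjoint union of \(4\)-cycles, the two coset-\(a\) vertices on the \(4\)-cycle through \(\varPhi_{a,ua^{q_0+1}}\) being \(\varPhi_{a,ua^{q_0+1}}\) and \(\varPhi_{a,ua^{q_0+1}+(a+b)^{q_0+1}}\) (the two-step return map on coset \(a\) is \(c\mapsto c+(a+b)^{q_0+1}\), of additive order \(2\)). Applying the local tool twice along such a \(4\)-cycle fixes \(\varPhi_{a,ua^{q_0+1}+(a+b)^{q_0+1}}\); as \(b\) ranges over \(\bbF_q\setminus\{0,a\}\) the exponent \((a+b)^{q_0+1}\) ranges over \(\bbF_q^*\setminus\{a^{q_0+1}\}\), so together with the seed points \(\varPhi_{a,ua^{q_0+1}}\) and \(\varPhi_{a,(u+1)a^{q_0+1}}=\varPhi_{a,ua^{q_0+1}}^{-1}\) this fixes \emph{every} vertex in the coset indexed by \(a\). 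Hence \(\alpha\) fixes all of \(K\), so \(\alpha=\id\), contradicting \(\alpha\neq\id\); therefore \(E=\{\id\}\). I expect the last step to be the main obstacle: the crux is to recognise that the inter-coset bipartite graphs split into \(4\)-cycles, so that a single round of propagation from the \(\Omega_u\)-seed reaches every vertex of each coset except the \(\Omega_{u+1}\)-seed vertex, allowing me to avoid a separate connectivity analysis of \(\Gamma_u\) with its centre removed.
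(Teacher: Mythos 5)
Your proof is correct, but it takes a genuinely different route from the paper's. The paper argues group-theoretically: $HK'$ normalizes $E$, so if $E\neq\{1\}$ then (as $E$ is a $2$-group by Lemma~\ref{lm:ptstabK'}) the $2$-group $EK'$ has nontrivial centre intersection with $E$, whence $C_E(K')\neq\{1\}$ and is $H$-invariant; Lemma~\ref{lm:0stab2}(i) forces $|C_E(K')|=2$, so its unique involution centralizes both $K'$ and $H$, and Lemma~\ref{lm:0stab2}(ii) followed by another application of \ref{lm:0stab2}(i) gives the contradiction. You avoid Lemma~\ref{lm:0stab2} entirely: you take an arbitrary involution $\alpha\in E$, combine the pair-fixing clause of Lemma~\ref{lm:ptstabK'} with the generalized Petersen structure (Proposition~\ref{pr:GPG}, Lemma~\ref{lm:GPGparam}(ii), and the remark that $k\neq\pm1$) to kill the case where $\alpha$ swaps $\Omega_u$ and $\Omega_{u+1}$ --- your observation that inversion commutes with the conjugation action of $H$ is exactly what lets you invoke Lemma~\ref{lm:GPGparam}(ii) without first knowing that $\alpha$ centralizes $H$ --- and then you propagate fixed points combinatorially. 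Both of your computational ingredients check out against the adjacency criterion $c+d\in\{u(a+b)^{q_0+1}+a^{q_0}b+b^{q_0+1},\,(u+1)(a+b)^{q_0+1}+a^{q_0}b+b^{q_0+1}\}$: simultaneous adjacency of $v$ to $w$ and to $w^{-1}$ would force $b^{q_0+1}\in\{0,(a+b)^{q_0+1}\}$, impossible for $a,b\neq 0$ by injectivity of $x\mapsto x^{q_0+1}$; and the bipartite graph between two distinct nonzero $K'$-cosets is $2$-regular with two-step return map $c\mapsto c+(a+b)^{q_0+1}$, giving your $4$-cycles. This propagation is precisely your substitute for the step inside the paper's Lemma~\ref{lm:0stab2}(i) where an automorphism \emph{centralizing} $K'$ and fixing one point of each $K'$-orbit must fix everything --- a step unavailable to you, since your $\alpha$ fixes $K'$ only pointwise as a vertex set and need not centralize it as a group (a distinction the paper explicitly flags in Section~\ref{sec:incidences}). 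The trade-off: the paper's proof is shorter because Lemma~\ref{lm:0stab2} is needed anyway for Proposition~\ref{pr:imprimitive}, so the contradiction comes nearly for free; yours is more self-contained and purely graph-theoretic, at the price of two extra (correct) computations, and it shows directly that pair-fixing plus fixing the neighbourhood of $\varepsilon$ already rigidifies the whole graph.
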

\begin{proof}
Let $E$ be defined as in Lemma \ref{lm:ptstabK'}. Since $HK'$ preserves the set of vertices in $K'$, $HK'$ normalizes $E$. Assume on the contrary that $E\neq \{1\}$, then $C_E(K') \neq \{1\}$ is $H$-invariant. Since $K'$ acts regularly on itself, $E\cap K'=\{1\}$. We apply Lemma \ref{lm:0stab2}(i) to conclude that $|C_E(K')|=2$. This means that there is a unique involutory automorphism $\alpha \in A$ which centralizes both $K'$ and $H$. Now, Lemma \ref{lm:0stab2}(ii) implies that $\alpha$ fixes $\Omega_u\cup \Omega_{u+1}$ point-wise. Finally, Lemma \ref{lm:0stab2}(i) yields $\alpha \in K$, a contradiction. 
\end{proof}

Let us now focus on the point stabilizer $A_\varepsilon$ of $\varepsilon$ in $A=\Aut(\Gamma_u)$. Clearly, $A_\varepsilon$ leaves $\Omega_u \cup \Omega_{u+1}$ invariant. Moreover, by the imprimitivity of $A$, $A_\varepsilon$ preserves $\Omega_\infty$ as well. Since any element of $\Omega_u$ is connected with a unique element of $\Omega_\infty$, each automorphism fixing all points in $\{\varepsilon\} \cup \Omega_u \cup \Omega_{u+1}$ fixes all points in $\Omega_\infty$. Hence by Lemma \ref{lm:Eistrivial}, the action of $A_\varepsilon$ on $\Omega_u \cup \Omega_{u+1}$ is faithful and the possibilities for $|A_\varepsilon|$ are $q-1$, $2(q-1)$ or $4(q-1)$ by Corollary \ref{co:0stab1}.

Let $S$ denote the stabilizer of the set $K'$ in $A$. On the one hand, $HK'\leq S$, hence $S$ is transitive on $K'$. On the other hand, $A_\varepsilon \leq S$ since $K'$ is a block of imprimitivity. Therefore, $A_\varepsilon = S_\varepsilon$, and
\[|S|=q|A_\varepsilon|\in \{q(q-1),2q(q-1),4q(q-1)\}.\]
This implies that $S$ induces a $2$-transitive solvable permutation group $\bar{S}$ on $K'$. Since the order of $K'$ is a power of $2$, Huppert's Theorem \cite{HuppertBlackburn}*{Theorem XII.7.3} yields that $\bar{S}$ is similar to a subgroup of the group $A\Gamma{}L(1,q)$ of all semilinear mappings
\[z\mapsto az^\alpha+b,\qquad a,b\in \bbF_q, a\neq 0, \alpha \in\Aut(\bbF_q)\]
on $\bbF_q$. Here, $|A\Gamma{}L(1,q)|=fq(q-1)$ for $q=2^f$. Since $\gcd(q-1,q_0^2-1)=1$, $f$ is odd, and the only possibility for the cardinality of $\bar{S}$ is $q(q-1)$. We apply Lemma \ref{lm:Eistrivial} once more to conclude that $|S|=q(q-1)$, which implies $A_\varepsilon = H$ and $A=HK=G$. This finishes the proof of Theorem \ref{thm:main}(ii).


\begin{bibdiv}
\begin{biblist}


\bib{CurtisReiner}{book}{
   author={Curtis, Charles W.},
   author={Reiner, Irving},
   title={Representation theory of finite groups and associative algebras},
   note={Reprint of the 1962 original},
   publisher={AMS Chelsea Publishing, Providence, RI},
   date={2006},
   pages={xiv+689},
   isbn={0-8218-4066-5},
   review={\MR{2215618}},
   doi={10.1090/chel/356},
}

\bib{MR3864735}{article}{
   author={Doyle, John Kevin},
   author={Tucker, Thomas W.},
   author={Watkins, Mark E.},
   title={Graphical Frobenius representations},
   journal={J. Algebraic Combin.},
   volume={48},
   date={2018},
   number={3},
   pages={405--428},
   issn={0925-9899},
   review={\MR{3864735}},
   doi={10.1007/s10801-018-0814-6},
}

\bib{GPG}{article}{
   author={Frucht, Roberto},
   author={Graver, Jack E.},
   author={Watkins, Mark E.},
   title={The groups of the generalized Petersen graphs},
   journal={Proc. Cambridge Philos. Soc.},
   volume={70},
   date={1971},
   pages={211--218},
   review={\MR{0289365}},
   doi={10.1017/s0305004100049811},
}

\bib{Gathenetal}{article}{
   author={von zur Gathen, Joachim},
   author={Knopfmacher, Arnold},
   author={Luca, Florian},
   author={Lucht, Lutz G.},
   author={Shparlinski, Igor E.},
   title={Average order in cyclic groups},
   language={English, with English and French summaries},
   journal={J. Th\'{e}or. Nombres Bordeaux},
   volume={16},
   date={2004},
   number={1},
   pages={107--123},
   issn={1246-7405},
   review={\MR{2145575}},
}

\bib{GuSa}{article}{
   author={Guralnick, Robert M.},
   author={Saxl, Jan},
   title={Monodromy groups of polynomials},
   conference={
      title={Groups of Lie type and their geometries},
      address={Como},
      date={1993},
   },
   book={
      series={London Math. Soc. Lecture Note Ser.},
      volume={207},
      publisher={Cambridge Univ. Press, Cambridge},
   },
   date={1995},
   pages={125--150},
   review={\MR{1320519}},
   doi={10.1017/CBO9780511565823.012},
}

\bib{hig}{article}{
   author={Higman, Graham},
   title={Suzuki $2$-groups},
   journal={Illinois J. Math.},
   volume={7},
   date={1963},
   pages={79--96},
   issn={0019-2082},
   review={\MR{0143815}},
}

\bib{HuppertBlackburn}{book}{
   author={Huppert, Bertram},
   author={Blackburn, Norman},
   title={Finite groups. III},
   series={Grundlehren der Mathematischen Wissenschaften [Fundamental
   Principles of Mathematical Sciences]},
   volume={243},
   publisher={Springer-Verlag, Berlin-New York},
   date={1982},
   pages={ix+454},
   isbn={3-540-10633-2},
   review={\MR{662826}},
}


\bib{Shpar}{article}{
   author={Shparlinski\u{\i}, I. E.},
   title={Some arithmetic properties of recurrence sequences},
   language={Russian},
   journal={Mat. Zametki},
   volume={47},
   date={1990},
   number={6},
   pages={124--131},
   issn={0025-567X},
   translation={
      journal={Math. Notes},
      volume={47},
      date={1990},
      number={5-6},
      pages={612--617},
      issn={0001-4346},
   },
   review={\MR{1074537}},
   doi={10.1007/BF01170895},
}

\bib{MathOverflow}{misc}{
    TITLE = {How often is $2^n-1$ a number with few divisors?},
    AUTHOR = {so-called friend Don (https://mathoverflow.net/users/16510/so-called-friend-don)},
    NOTE = {URL:https://mathoverflow.net/q/221269 (version: 2015-10-19)},
    EPRINT = {https://mathoverflow.net/q/221269},
    URL = {https://mathoverflow.net/q/221269}
}

\end{biblist}
\end{bibdiv}

\end{document}